\newtheorem{theorem}{Theorem}
\newtheorem{prop}{Proposition}
\newtheorem{lemma}{Lemma}
\newtheorem{cor}{Corollary}
\theoremstyle{definition}{
\newtheorem{rem}{Remark}
}
\newcommand{\cG}{{\mathcal G}}
\DeclareMathOperator{\diag}{diag} 
\DeclareMathOperator{\GL}{GL} 
\DeclareMathOperator{\In}{I}
\begin{document}
\author{Hans Havlicek \and Andrzej Matra\'s \and Mark Pankov}
\title{Geometry of free cyclic submodules over ternions}

\maketitle


\begin{abstract}
Given the algebra $T$ of ternions (upper triangular $2\times 2$ matrices) over
a commutative field $F$ we consider as set of points of a projective line over
$T$ the set of all free cyclic submodules of $T^2$. This set of points can be
represented as a set of planes in the projective space over $F^6$. We exhibit
this model, its adjacency relation, and its automorphic collineations. Despite
the fact that $T$ admits an $F$-linear antiautomorphism, the plane model of our
projective line does not admit any duality.
\end{abstract}

\section{Introduction}

Note that all our rings are associative, with a unit element $1\neq 0$, which
acts unitally on modules, and is inherited by subrings.

One of the crucial tasks in ring geometry is to find a ``good'' definition of
the \emph{projective line} over a ring $R$. In terms of left homogeneous
coordinates a \emph{point} of such a line should be a \emph{cyclic submodule}
$R(a,b)$ of the free $R$-left module $R^2$. But which pairs $(a,b)$ should be
allowed to be generators of points? Indeed, there are different definitions and
we refer to \cite{lash-97} and \cite{veld-85} for detailed discussion which
includes also ``higher-dimensional'' spaces.

The aim of the present paper is to exhibit the interplay between two notions:
On the one hand we consider as \emph{points\/} the \emph{free cyclic
submodules\/} $R(a,b)$ of $R^2$. On the other hand we have the distinguished
subset of \emph{unimodular points}, \emph{i.~e.}, points of the form $R(a,b)$
such that there are $x,y\in R$ with $ax+by=1$. Several authors consider as
points only our unimodular points. Often also extra conditions on the
coordinate ring (like $R$ being of stable rank $2$) can be found; see
\cite{blunck+he-05}, \cite{herz-95}, or \cite{veld-85}. At the other extreme,
in \emph{projective lattice geometry\/} any cyclic (or: $1$-generated)
submodule $R(a,b)\subset R^2$ is called a point, whereas our points are called
\emph{free points} in this context \cite[p.~1129]{bre+g+s-95}; see also
\cite{faure-04a}.

For any ring $R$, the submodule $R(1,0)$ is a unimodular point, but the
existence of non-unimodular points cannot be guaranteed. An easy example is the
projective line over any field. All its points are unimodular. So it seems
necessary to restrict ourselves to a class of rings for which non-unimodular
points do exist. One such class is formed by the algebras of ternions over
commutative fields. There are several articles which describe the geometry over
ternions based on unimodular points \cite{beck-36b}, \cite{beck-36a},
\cite{benz-77}, \cite{depu-59}, \cite{depu-60} in great detail, but little
seems to be known about the properties of the remaining (non-unimodular) points
\cite{havlicek+saniga-09a}, \cite{saniga+h+p+p-08a}, \cite{saniga+pracna-08a}.
\par
Results and notions which are used without further reference can be found, for
example, in \cite{pankov-10a}.

\section{Cyclic submodules}

Let $F$ be a (commutative) field and $T$ be the ring of \emph{ternions},
\emph{i.~e.}, upper triangular $2\times 2$ matrices
\begin{equation*}
    \begin{pmatrix} x & y\\0 & z\end{pmatrix},
\end{equation*}
over $F$. We shall often identify $x\in F$ with the ternion $xI$, where $I\in
T$ denotes the $2\times 2$ identity matrix. In terms of this identification $F$
equals the centre of $T$. The ring $T$ is a non-commutative
\emph{three-dimensional\/} algebra over $F$, a fact which is reflected in its
name\footnote{According to \cite{benz-77} the term ``ternions'' is due to
E.~Study (1889). It is worth noting that J.~Petersen used the same phrase in a
different meaning already in 1885, namely as a name for a three-dimensional
commutative algebra over the real numbers \cite[3.2]{luetzen-01a}.}. A nice
algebraic characterisation of the algebra of ternions over $F$ can be found in
\cite{lex+p+w-80a}.
\par
By \cite[Lemma~2]{havlicek+saniga-09a}, the non-zero cyclic submodules of the
free $T$-left module $T^{2}$ fall into five orbits under the action of the
group $\GL_2(T)$. Below we give one representative for each orbit:
\begin{equation}\label{eq:X0}
    X_{0}:= T \left[
        \begin{pmatrix} 1 & 0\\   0 & 1\end{pmatrix},
        \begin{pmatrix} 0 & 0\\   0 & 0\end{pmatrix}
        \right]=
        \left\{\left.\left[
        \begin{pmatrix} x & y\\0 & z\end{pmatrix},
        \begin{pmatrix} 0 & 0\\0 & 0\end{pmatrix}
        \right]\right|x,y,z\in F\right\},
\end{equation}
\begin{equation}\label{eq:Y0}
    Y_{0}:=
        T \left[
        \begin{pmatrix} 0 & 0\\0 & 1\end{pmatrix},
        \begin{pmatrix} 0 & 1\\0 & 0\end{pmatrix}
        \right] =
        \left\{\left.\left[
        \begin{pmatrix} 0 & y\\0 & z\end{pmatrix},
        \begin{pmatrix} 0 & x\\0 & 0\end{pmatrix}
        \right]\right| x,y,z\in F \right\},
\end{equation}
\begin{equation}\label{eq:alpha0}
    \alpha_{0}:=
    T \left[
    \begin{pmatrix} 0 & 0\\0 & 1\end{pmatrix},
    \begin{pmatrix} 0 & 0\\0 & 0\end{pmatrix}
    \right] =
    \left\{\left.\left[
    \begin{pmatrix} 0 & y\\0 & z\end{pmatrix},
    \begin{pmatrix} 0 & 0\\0 & 0\end{pmatrix}
    \right]\right| y,z\in F \right\},
\end{equation}
\begin{equation}\label{eq:beta0}
    \beta_{0}:=
        T \left[
        \begin{pmatrix} 1 & 0\\   0 & 0    \end{pmatrix},
        \begin{pmatrix} 0 & 0\\   0 & 0    \end{pmatrix}
        \right] =
        \left\{\left.\left[
        \begin{pmatrix} x & 0\\   0 & 0    \end{pmatrix},
        \begin{pmatrix} 0 & 0\\   0 & 0    \end{pmatrix}
        \right]\right| y,z\in F \right\},
\end{equation}
\begin{equation}\label{eq:gamma0}
    \gamma_{0}:=
    T \left[
    \begin{pmatrix} 0 & 1\\0 & 0\end{pmatrix},
    \begin{pmatrix} 0 & 0\\0 & 0\end{pmatrix}
    \right] =
    \left\{\left.\left[
    \begin{pmatrix} 0 & x\\0 & 0\end{pmatrix},
    \begin{pmatrix} 0 & 0\\0 & 0\end{pmatrix}
    \right]\right| x\in F \right\}.
\end{equation}
An arbitrary submodule from the orbit of $X_0$ has the form $X=X_{0}S$ with
$S\in \GL_{2}(T)$, and will be called an \emph{$X$-submodule} for short.
Similarly the other types of cyclic submodules are called
\emph{$Y$-submodules}, \emph{$\alpha$-submodules}, and so on. Submodules of the
first two types are free, whence they are elements of the set of points. The
unimodular points are the $X$-submodules, the non-unimodular points are the
$Y$-submodules of $T^2$. The remaining three types are torsion.
\par
Any $2\times 2$ matrix $S$ over $T$ can be considered as a $4\times 4$ matrix
over $F$ of the block form
\begin{equation}\label{eq:block}
    \left(\!\!
    \begin{array}{cc|cc}
    a_{11} & a_{12} & b_{11} & b_{12}\\
    0      & a_{22} & 0      & b_{22}\\
    \hline
    c_{11} & c_{12} & d_{11} & d_{12}\\
    0      & c_{22} & 0      & d_{22}
    \end{array}\!\!\right).
\end{equation}
We have $S\in \GL_{2}(T)$ if, and only if, its determinant (as matrix over $F$)
satisfies
\begin{equation}\label{eq:det4}
    \det S =
    ( a_{22} d_{22}-b_{22} c_{22})
    ( a_{11} d_{11}-b_{11} c_{11})\neq 0,
\end{equation}
because invertibility of $S$ over $F$ implies that $S^{-1}$ can be partitioned
into four ternions as in (\ref{eq:block}). Hence it is easy to determine
whether or not a $2\times 2$ matrix over $T$ is invertible or not. See
\cite{benz-79} and \cite[pp.~9--10]{depu-60} for more results on invertibility
and the actual inversion of matrices over ternions.
\par
By assuming $S$ to be invertible, we obtain from
\eqref{eq:X0}--\eqref{eq:gamma0} and \eqref{eq:det4} the following general form
for $X$-submodules, $Y$-submodules, $\alpha$-submodules, and so on:
\begin{equation}\label{eq:X}
        \begin{array}{l}
        X =   \left\{\left.\left[
        \begin{pmatrix}
        a_{11}x & a_{12}x+ a_{22}y\\
        0       & a_{22}z
        \end{pmatrix},
        \begin{pmatrix}
        b_{11}x & b_{12}x+ b_{22}y\\
        0       & b_{22}z
        \end{pmatrix}
        \right]\right|x,y,z\in F \right\}\\
        \quad\quad\quad\quad\mbox{with}\;\;(a_{11},b_{11}),(a_{22},b_{22})\neq (0,0),
        \end{array}
\end{equation}
\begin{equation}\label{eq:Y}
        \begin{array}{l}
        Y = \left\{\left.\left[
        \begin{pmatrix}
        0 & a_{22}y+ c_{22}x\\
        0 & a_{22}z
        \end{pmatrix},
        \begin{pmatrix}
        0 & b_{22}y+ d_{22}x\\
        0 & b_{22}z
        \end{pmatrix}
        \right]\right|x,y,z\in F \right\}\\
        \quad\quad\quad\quad\mbox{with}\;\;( a_{22} d_{22}-b_{22} c_{22})\neq 0,
        \end{array}
\end{equation}
\begin{equation}
        \label{eq:alpha} \alpha =   \left\{\left.\left[
        \begin{pmatrix}
        0 & a_{22}y\\
        0 & a_{22}z
        \end{pmatrix},
        \begin{pmatrix}
        0 & b_{22}y\\
        0 & b_{22}z
        \end{pmatrix}
        \right]\right|y,z\in F \right\}
        \;\;\mbox{with}\;\;(a_{22},b_{22})\neq (0,0),
\end{equation}
\begin{equation}
        \label{eq:beta} \beta =   \left\{\left.\left[
        \begin{pmatrix}
        a_{11}x & a_{12}x\\
        0 & 0
        \end{pmatrix},
        \begin{pmatrix}
        b_{11}x & b_{12}x\\
        0 & 0
        \end{pmatrix}
        \right]\right|x\in F \right\}
        \;\;\mbox{with}\;\;(a_{11},b_{11})\neq (0,0),
\end{equation}
\begin{equation}
        \label{eq:gamma} \gamma =   \left\{\left.\left[
        \begin{pmatrix}
        0 & a_{22}x\\
        0 & 0
        \end{pmatrix},
        \begin{pmatrix}
        0& b_{22}x\\
        0 &  0
        \end{pmatrix}
        \right]\right|x\in F \right\}
        \;\;\mbox{with}\;\;(a_{22},b_{22})\neq (0,0).
\end{equation}
Conversely, any subset of $T^2$ as in \eqref{eq:X}--\eqref{eq:gamma} is easily
seen to be a submodule of the appropriate type.

\section{Representation of points}\label{se:represent}

The unimodular points of $T^2$ can be represented as subspaces of an $F$-vector
space as follows. Let $U$ be a \emph{faithful} right module over $T$ of
$F$-dimension $r$. (Recall that $F\subset T$ due to our identification.) It
will be convenient to write $x u := u x$ for all $u\in U$ and all $x\in F$. So
elements of $F$ may act on $U$ from either side, whereas proper ternions act
from the right hand side only. Then $U\times U$ is a right $T$-module in the
usual way and at the same time an $F$-vector space of dimension $2r$. The
assignment
\begin{equation}\label{eq:Grass-uni}
    T(A,B) \mapsto \{(uA,uB)\mid u\in U\}
\end{equation}
defines an \emph{injective} map $\Phi_U$, say, of the set of unimodular points
into the Grassmannian $\cG_r(U\times U)$ of $r$-dimensional subspaces of
$U\times U$. This is a direct consequence of more general results from
\cite[Theorem~4.2]{blunck+h-00b} and \cite[pp.~805--806]{herz-95}. Like many
authors we adopt the projective point of view: The elements of the Grassmannian
$\cG_r(U\times U)$ will be identified with the corresponding $(r-1)$-flats
(projective subspaces) of the $(2r-1)$-dimensional projective space on $U\times
U$ (over $F$).

\par
The easiest example is obtained by choosing $U:=F^2$ and by defining the right
action of a ternion on a row $(u_1,u_2)\in F^2$ as the usual matrix
multiplication. It is convenient to identify here the pair
$\big((u_1,u_2),(v_1,v_2)\big)\in U\times U$ with the row vector
$(u_1,u_2,v_1,v_2)\in F^4$. This leads to a well known model for the set of
unimodular points as a set of lines ($1$-flats) in the three-dimensional
projective space over $F$: The images of the unimodular points under
$\Phi_{F^2}$ are precisely those lines of this projective space which meet the
line generated by $(0,1,0,0)$ and $(0,0,0,1)$ at an arbitrary point
($1$-dimensional subspace, $0$-flat). In other words (cf., \emph{e.~g.}, the
table in \cite[p.~30]{hirschfeld-85}), the unimodular points are represented by
the lines of a \emph{special linear complex} without its axis. A proof can be
found in \cite[Example~5.5]{blunck+h-00b} or \cite[p.~239]{blunck+he-05} (up to
a permutation of coordinates due to the usage of lower triangular matrices).
For the reader's convenience let us sketch the easy proof. The point $X$ from
(\ref{eq:X}) is mapped under $\Phi_{F^2}$ to the subspace comprising all
vectors of the form
\begin{equation*}
    \left[
    (u_1,u_2)
        \begin{pmatrix}
        a_{11}x & a_{12}x+a_{22}y\\
        0 & a_{22}z
        \end{pmatrix},
    (u_1,u_2)
        \begin{pmatrix}
         b_{11}x& b_{12}x+b_{22}y\\
         0 & b_{22}z
        \end{pmatrix}
        \right]
\end{equation*}
with variable $u_1,u_2,x,y,z\in F$. This subspace is spanned by the linearly
independent vectors
\begin{equation*}
    (a_{11}, a_{12}, b_{11}, b_{12}) \mbox{\;\;\;\;and\;\;\;\;}
    (0, a_{22}, 0, b_{22}),
\end{equation*}
whence it is a line. The same representation was obtained in several papers on
projective geometry over ternions \cite[p.~157]{beck-36a},
\cite[pp.~128--132]{depu-59}, \cite[Part~C]{depu-60}. The mapping $\Phi_{F^2}$
can be extended to a mapping of non-unimodular points by following
(\ref{eq:Grass-uni}). Indeed, the $Y$-submodule (\ref{eq:Y}) is mapped to the
subspace
\begin{equation*}
    \left[
    (u_1,u_2)
        \begin{pmatrix}
        0 & a_{22}y +c_{22}x\\
        0 & a_{22}z
        \end{pmatrix},
    (u_1,u_2)
        \begin{pmatrix}
         0 & b_{22}y+d_{22}x\\
         0 & b_{22}z
        \end{pmatrix}
        \right]
\end{equation*}
with variable $u_1,u_2,x,y,z\in F$. But this subspace does not depend on the
choice of $Y$, because it is spanned by the vectors $ (0,1,0,0)$ and
$(0,0,1,0)$. Thus, in projective terms, \emph{all\/} non-unimodular points are
mapped to one line, namely the axis of the special linear complex we
encountered before. Hence this extended map is no longer injective and
therefore of little use.

In order to obtain an injective representation of unimodular and non-unimodular
points, we make use of the \emph{right regular representation\/} of $T$, that
is we let $U=T$. As the mapping $\Phi_T$ is the identity, there will be no need
to write it down explicitly below. There is the natural identification $\Phi$
of the free left module $T^{2}$ with the vector space $F^{6}$:
\begin{equation}\label{eq:Phi}
    \left[
    \begin{pmatrix}
    a_{11} & a_{12}\\
    0 & a_{22}
    \end{pmatrix},
    \begin{pmatrix}
    b_{11} & b_{12}\\
    0 & b_{22}
    \end{pmatrix}
    \right ]\stackrel\Phi\longmapsto (a_{11},a_{12},a_{22},b_{11},b_{12},b_{22}).
\end{equation}
We put $x_{1},x_{2},\dots,x_{6}$ for the coordinates in $F^6$. Any
$S\in\GL_2(T)$ defines a linear bijection $g:T^2\to T^2$ and a linear bijection
$f:F^6\to F^6$ which is characterised by $\Phi\circ g=f\circ\Phi$. If $S$ is
given as (\ref{eq:block}) the associated matrix of $f$ has the form
\begin{equation}\label{eq:block6}
    \left(\!\!
    \begin{array}{cc|c|cc|c}
    a_{11} & a_{12} &0 &b_{11} & b_{12}&0\\
    0      & a_{22} &0 &0& b_{22}&0\\
    \hline
    0 &0     & a_{22} &0 &0& b_{22}\\
    \hline
    c_{11} & c_{12} &0 & d_{11} & d_{12}&0\\
    0      & c_{22} &0 & 0      & d_{22}&0\\
    \hline
    0 &0     & c_{22} &0 &0& d_{22}\\
    \end{array}\!\!\right)\in\GL_6(F).
\end{equation}
Denote by ${\cG}_{k}(F^6)=:{\cG}_{k}$ the Grassmannian consisting of
$k$-dimensional subspaces of $F^6$, $k\in \{1,2,3,4,5\}$. Like before the
elements of this Grassmannian will be identified with the corresponding
$(k-1)$-flats of the $5$-dimensional projective space over $F$ (points, lines,
planes, solids, hyperplanes). We write ${\cG}$ for the set formed by the
$\Phi$-images of all non-zero cyclic submodules. For each $i\in
\{X,Y,\alpha,\beta,\gamma\}$ we denote by ${\cG}_{i}$ the set of the
$\Phi$-images of all submodules of type $i$. So,
\begin{equation}\label{eq:cG}
    {\cG}={\cG}_{X}\cup{\cG}_{Y}\cup{\cG}_{\alpha}\cup
    {\cG}_{\beta}\cup{\cG}_{\gamma}.
\end{equation}
Since every element $S\in\GL_{2}(T)$ induces a linear automorphism of $F^6$
according to \eqref{eq:block6}, the $\Phi$-images of submodules of the same
type have the same dimension. From (\ref{eq:X0})--(\ref{eq:gamma0}), we obtain
\begin{displaymath}
    {\cG}_{X}\cup{\cG}_{Y}\subset{\cG}_{3},\;\;\;
    {\cG}_{\alpha}\subset{\cG}_{2},\;\;\;
    {\cG}_{\beta}\cup{\cG}_{\gamma}\subset{\cG}_{1}.
\end{displaymath}
Hence all non-zero cyclic submodules of $T^2$ are represented by non-zero
subspaces of $F^6$. Since $\Phi:T^2\to F^6$ is injective, the following holds
trivially:
\begin{prop}
The $\Phi$-images of distinct cyclic submodules of $T^2$ are distinct subspaces
of $F^6$.
\end{prop}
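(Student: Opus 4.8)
The plan is to reduce the claim to the set-theoretic injectivity of $\Phi$ that was already noted just before the statement. I would first emphasise that $\Phi$ of \eqref{eq:Phi} is not merely some injective map but the \emph{natural identification} of the underlying set of $T^2$ with that of $F^6$: it simply records the six field entries $a_{11},a_{12},a_{22},b_{11},b_{12},b_{22}$ of a pair of ternions. Thus $\Phi$ is a bijection of sets, and this is the only property of $\Phi$ that the proof will use.

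Next I would invoke the elementary fact that a bijection of sets sends distinct subsets to distinct subsets. Explicitly, if $M_1$ and $M_2$ are cyclic submodules of $T^2$ with $M_1\neq M_2$, then, regarded as subsets, they differ in some element, say $w\in M_1\setminus M_2$; injectivity of $\Phi$ gives $\Phi(w)\in\Phi(M_1)$ but $\Phi(w)\notin\Phi(M_2)$, whence $\Phi(M_1)\neq\Phi(M_2)$. Finally I would recall from the paragraphs preceding the proposition that each such $\Phi$-image is in fact a subspace of $F^6$ (of dimension $1$, $2$, or $3$ according to the type of the submodule), so that the distinct images are indeed distinct subspaces, as asserted.

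I expect no genuine obstacle here — this is precisely why the text announces the result as holding trivially. The single point deserving a moment's attention is conceptual rather than computational: distinctness is a statement about $\Phi$ viewed as a map of \emph{sets}, so one should keep the set-level injectivity (which carries the whole argument) apart from the $F$-linearity and $\GL_2(T)$-compatibility of $\Phi$ expressed in \eqref{eq:block6}, which play no role in this particular claim although they are essential elsewhere in the paper.
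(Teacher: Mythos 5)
Your argument is exactly the paper's: the proposition is stated as an immediate consequence of the injectivity of $\Phi$ (``Since $\Phi:T^2\to F^6$ is injective, the following holds trivially''), and your explicit unpacking of why a set-level injection separates distinct subsets is precisely what that remark leaves implicit. Correct, and the same approach.
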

In particular, the $\Phi$-images of distinct free cyclic submodules are
distinct planes of $F^6$.

\section{Structure of ${\cG}$}

Even though we aim at describing $\cG_X\cup \cG_Y$, \emph{i.~e.}, the
$\Phi$-images of free cyclic submodules of $T^2$, we shall exhibit the entire
set $\cG$ from \eqref{eq:cG}, because the remaining elements of $\cG$ will turn
out useful. First we recall some basic notions for Grassmannians $\cG_k$, $k\in
\{1,2,3,4,5\}$. If $V$ and $W$ are subspaces of $F^6$ with $V\subset W$ then
$[V,W]_k$ denotes the subset of $\cG_k$ formed by all $k$-dimensional subspaces
which contain $V$ and are contained in $W$. If moreover $\dim V=k-1$ and $\dim
W=k+1$ then $[V,W]_k$ is called a \emph{pencil\/} of $\cG_k$.

\par
In our further investigation the solids
\begin{equation}\label{eq:J}
    J \;\;\;\mbox{defined by the conditions}\;\;\; x_3=x_6=0,
\end{equation}
\begin{equation}\label{eq:K}
    K \;\;\;\mbox{defined by the conditions}\;\;\; x_1=x_4=0,
\end{equation}
and their intersection, namely the line
\begin{equation}\label{eq:L}
    L \;\;\;\mbox{defined by the conditions}\;\;\; x_1=x_3=x_4=x_6=0
\end{equation}
will play a crucial role. Furthermore, in the solid $K$ we have the hyperbolic
quadric
\begin{equation}\label{eq:H}
    H\;\;\;\mbox{defined by the conditions}\;\;\; x_2x_6-x_3x_5=x_1=x_4=0.
\end{equation}
$J$ and $K$ (and henceforth $L$) are invariant subspaces of any linear
bijection of $F^6$ which arises from $S\in\GL_2(T) $ according to
\eqref{eq:block6}. This can be read off immediately from the rows of the matrix
in (\ref{eq:block6}). Furthermore, also the hyperbolic quadric $H$ is easily
seen to be invariant under any such linear bijection.

\begin{prop}\label{prop:gamma-beta-alpha}
The following assertions are fulfilled:
\begin{enumerate}
\item ${\cG}_{\gamma}$ coincides with the set of all points of the line
    $L$. The line $L$ is a generator of the hyperbolic quadric $H$.

\item ${\cG}_{\beta}$ coincides with the set of all points of the solid $J$
    which are off the line $L$.

\item ${\cG}_{\alpha}$ is that regulus of the hyperbolic quadric $H$ which
    does not contain $L$.

\end{enumerate}
\end{prop}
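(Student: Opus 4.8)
The plan is to read off the $\Phi$-images of the generic representatives \eqref{eq:alpha}--\eqref{eq:gamma} directly from the coordinate assignment \eqref{eq:Phi}, and then to match the resulting subspaces against $L$, $J$ and $H$ from \eqref{eq:J}, \eqref{eq:L} and \eqref{eq:H}. Throughout, the three parts are largely a matter of substitution; only the regulus statement in (3) needs genuine geometric input.

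For (1), applying $\Phi$ to the generator of $\gamma$ in \eqref{eq:gamma} gives the $1$-dimensional space spanned by $(0,a_{22},0,0,b_{22},0)$ with $(a_{22},b_{22})\neq(0,0)$; as $(a_{22},b_{22})$ runs through all nonzero pairs these describe precisely the points with $x_1=x_3=x_4=x_6=0$, which by \eqref{eq:L} is exactly $L$, and conversely every point of $L$ arises so. Substituting $x_1=x_3=x_4=x_6=0$ into \eqref{eq:H} makes $x_2x_6-x_3x_5$ vanish identically, so $L\subset H$, and being a line on the hyperbolic quadric, $L$ is one of its generators. For (2), applying $\Phi$ to \eqref{eq:beta} gives the $1$-dimensional space spanned by $(a_{11},a_{12},0,b_{11},b_{12},0)$ with $(a_{11},b_{11})\neq(0,0)$; the vanishing third and sixth coordinates place it in the solid $J$ of \eqref{eq:J}, while the condition $(a_{11},b_{11})\neq(0,0)$ says exactly that it does not satisfy $x_1=x_4=0$. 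By \eqref{eq:L} the points of $J$ with $x_1=x_4=0$ are precisely those of $L$, so $\cG_\beta$ is the set of points of $J$ off $L$, the reverse inclusion being clear since every such point is realised by a suitable choice of the free parameters.

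The substantial part is (3). Applying $\Phi$ to \eqref{eq:alpha} gives the subspace spanned by $(0,a_{22},0,0,b_{22},0)$ and $(0,0,a_{22},0,0,b_{22})$, which is $2$-dimensional since $(a_{22},b_{22})\neq(0,0)$, hence a projective line; substituting its general point $(0,a_{22}y,a_{22}z,0,b_{22}y,b_{22}z)$ into \eqref{eq:H} shows it lies on $H$, so it is a generator. To decide its regulus I would identify $H$ inside the solid $K$ of \eqref{eq:K} (with free coordinates $x_2,x_3,x_5,x_6$) with the Segre variety of rank-one matrices $\begin{pmatrix}x_2 & x_3\\ x_5 & x_6\end{pmatrix}$, whose determinant is the form in \eqref{eq:H}. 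Writing such a matrix as $\begin{pmatrix}u_1\\ u_2\end{pmatrix}(v_1,v_2)$, the two reguli of $H$ arise by fixing the left factor $(u_1,u_2)$ or the right factor $(v_1,v_2)$. The $\alpha$-line has matrix $\begin{pmatrix}a_{22}\\ b_{22}\end{pmatrix}(y,z)$, so it lies in the regulus with fixed left factor $(a_{22},b_{22})$, and letting $(a_{22},b_{22})$ range over all nonzero pairs exhausts exactly that regulus. Each point of $L$, on the other hand, has matrix $\begin{pmatrix}x_2\\ x_5\end{pmatrix}(1,0)$, so $L$ lies in the opposite regulus; as a consistency check, the spanning vector $(0,a_{22},0,0,b_{22},0)$ is a point of $L$, so each $\alpha$-line meets $L$ in one point, as generators of opposite reguli must. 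Hence $\cG_\alpha$ is the regulus of $H$ that does not contain $L$.

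The main obstacle I anticipate is the bookkeeping in (3): one must check that fixing the left, respectively the right, factor really produces the two reguli (generators in one family are mutually skew and meet each generator of the other family) and that $\cG_\alpha$ sweeps out a whole regulus rather than a proper subfamily. Once $H$ is presented as a Segre variety this is routine, and everywhere else the argument reduces to the direct coordinate substitutions indicated above.
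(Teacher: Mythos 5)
Your proof is correct. For parts (1) and (2) it coincides with the paper's, which simply cites \eqref{eq:gamma} and \eqref{eq:beta}; your explicit coordinate substitutions are precisely the content behind that citation. For part (3) you take a genuinely different route: you present $H$ as the Segre variety of rank-one matrices $\left(\begin{smallmatrix}x_2 & x_3\\ x_5 & x_6\end{smallmatrix}\right)$, identify the two reguli with the two factorisations $\left(\begin{smallmatrix}u_1\\ u_2\end{smallmatrix}\right)(v_1,v_2)$, place each $\alpha$-line in the fixed-left-factor family and $L$ in the fixed-right-factor family, and get surjectivity onto the regulus because $(a_{22},b_{22})$ sweeps out all of $\bP^1$. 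The paper instead argues via the group action: it observes that $\Phi(\gamma_0)=\Phi(\alpha_0)\cap L$ and that $\Phi(\alpha_0)$ is a generator of $H$; since $H$ and $L$ are invariant under every map \eqref{eq:block6}, each $\Phi(\alpha_0 S)$ is a generator of $H$ meeting $L$ in the single point $\Phi(\gamma_0 S)$, hence lies in the regulus opposite to the one containing $L$, and by part (1) these intersection points exhaust $L$, which forces $\cG_\alpha$ to be that entire regulus. Your version is more computational but self-contained and makes the regulus identification completely explicit; the paper's is shorter because it reuses part (1) and the previously established invariance of $H$ and $L$. The bookkeeping you flag as a potential obstacle (that fixing one Segre factor really yields a regulus, and that the whole family is obtained) is standard and is fully handled by your parametrisation.
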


\begin{proof}
The first two assertions hold because of \eqref{eq:gamma} and \eqref{eq:beta}.
To show the last assertion we first notice that $\Phi(\gamma_0)=
\Phi(\alpha_0)\cap L$ and that $\Phi(\alpha_0)$ is a line of the hyperbolic
quadric $H$. As $S$ varies in $\GL_2(T)$ the point $\Phi(\gamma_0S)=
\Phi(\alpha_0S) \cap L$ ranges in $\cG_\gamma$, whence the line
$\Phi(\alpha_0S)$ ranges in that regulus on $H$ which does not contain $L$.
\end{proof}

\begin{prop}\label{prop:Y}
${\cG}_{Y}$ is a pencil of planes, namely $[L,K]_3$.
\end{prop}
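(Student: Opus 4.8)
The plan is to establish the two inclusions $\cG_Y\subseteq[L,K]_3$ and $[L,K]_3\subseteq\cG_Y$ separately, working throughout with the explicit coordinate description of $\Phi$ from \eqref{eq:Phi}.

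First I would compute the image of an arbitrary $Y$-submodule. Applying $\Phi$ to the general form \eqref{eq:Y}, a typical element of $Y$ has coordinates $(0,\,a_{22}y+c_{22}x,\,a_{22}z,\,0,\,b_{22}y+d_{22}x,\,b_{22}z)$, so as $x,y,z$ range over $F$ the subspace $\Phi(Y)$ is spanned by the three vectors $(0,a_{22},0,0,b_{22},0)$, $(0,c_{22},0,0,d_{22},0)$ and $(0,0,a_{22},0,0,b_{22})$. Writing $e_1,\dots,e_6$ for the standard basis of $F^6$, the first two vectors lie in $L=\langle e_2,e_5\rangle$ and are linearly independent by the condition $a_{22}d_{22}-b_{22}c_{22}\neq 0$ attached to \eqref{eq:Y}, so together they span $L$; the third lies in $K=\langle e_2,e_3,e_5,e_6\rangle$ but not in $L$. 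Hence $\Phi(Y)$ is a plane with $L\subset\Phi(Y)\subset K$. Since $\dim L=2=3-1$ and $\dim K=4=3+1$, this says precisely that $\Phi(Y)\in[L,K]_3$, which gives the first inclusion.

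For the converse I would parametrize the pencil. As the quotient $K/L$ is two-dimensional, spanned by the images of $e_3$ and $e_6$, every plane $P\in[L,K]_3$ has the form $P=L+\langle\lambda e_3+\mu e_6\rangle$ for some $(\lambda,\mu)\neq(0,0)$, unique up to a common scalar. It then suffices to realise $P$ as some $\Phi(Y)$. Comparing with the spanning vectors above, this amounts to choosing $a_{22}=\lambda$, $b_{22}=\mu$ and then completing with $c_{22},d_{22}$ so that the admissibility condition $a_{22}d_{22}-b_{22}c_{22}\neq 0$ holds. Such a completion always exists because $(\lambda,\mu)\neq(0,0)$ makes $(c_{22},d_{22})\mapsto\lambda d_{22}-\mu c_{22}$ a nonzero linear form on $F^2$; the resulting data define a genuine $Y$-submodule whose image is exactly $P$.

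The only point demanding care -- the ``main obstacle'', such as it is -- is this last completion step: one must check that the invertibility constraint inherited from $\GL_2(T)$ never prevents us from hitting a prescribed plane of the pencil. A cleaner, equivalent alternative would be to argue by transitivity. The subspaces $L$ and $K$ are invariant under every bijection \eqref{eq:block6}, and the induced action on $K/L$ is given by an invertible $2\times 2$ matrix (with determinant $a_{22}d_{22}-b_{22}c_{22}$) that ranges over all of $\GL_2(F)$. Since $\GL_2(F)$ acts transitively on the points of $\bP(K/L)$, and these points correspond bijectively to the members of $[L,K]_3$, the single submodule $Y_0$ -- for which $\Phi(Y_0)=\langle e_2,e_3,e_5\rangle$ -- already has the entire pencil as its orbit, and $\cG_Y=[L,K]_3$ follows at once.
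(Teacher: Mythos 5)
Your proof is correct and follows essentially the same route as the paper: both establish $\cG_Y\subseteq[L,K]_3$ from the explicit spanning vectors of $\Phi(Y)$ in \eqref{eq:Y}, and both obtain the reverse inclusion by parametrizing the planes of the pencil and completing $(a_{22},b_{22})$ to an admissible quadruple with $a_{22}d_{22}-b_{22}c_{22}\neq 0$. Your alternative argument via the transitive action of $\GL_2(F)$ on $\bP(K/L)$ is a valid (and slightly slicker) variant, but it is not needed.
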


\begin{proof}
Let $Y$ be a free submodule as in \eqref{eq:Y}. Hence $\Phi(Y)$ is contained in
the solid $K$ defined in \eqref{eq:K}. Letting $z=0$ in \eqref{eq:Y} shows that
the line $L$ is contained in the plane $\Phi(Y)$. So we have ${\cG}_{Y}\subset
[L,K]_{3}$.

Each plane $N\subset K$ satisfies conditions
\begin{displaymath}
    ax_{2}+bx_{3}+cx_{5}+dx_{6}=0,\;\;x_{1}=x_{4}=0
\end{displaymath}
for some $a,b,c,d\in F$ not all $0$. If this plane contains $L$ then
$ax_{2}+cx_{5}=0$ for all $x_{2},x_{5}\in F$. Hence $a=c=0$ and
$(b,d)\neq(0,0)$. We define $a_{22}:=-d$ and $b_{22}:=b$. So there exist
$c_{22},d_{22}\in F$ such that $( a_{22} d_{22}-b_{22} c_{22})\neq 0$. By the
remark after \eqref{eq:X}--\eqref{eq:gamma}, there exists a free submodule $Y$
with $\Phi(Y)=N$. Therefore, ${\cG}_{Y}$ coincides with the pencil $[L,K]_{3}$.
\end{proof}

\begin{prop}\label{prop:X}
${\cG}_{X}$ consist of all planes $M$ of the Grassmannian ${\cG}_{3}$ which
satisfy the following two conditions:
\begin{equation}\label{eq:X-J}
    M \cap J \;\;\;\mbox{is a line other than}\;\;\; L.
\end{equation}
\begin{equation}\label{eq:X-K}
    M \cap K \;\;\;\mbox{is a line belonging to the regulus}\;\;\; \cG_\alpha.
\end{equation}

\end{prop}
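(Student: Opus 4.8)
The plan is to prove the two inclusions separately. First I would show that every plane $M=\Phi(X)$ arising from an $X$-submodule satisfies \eqref{eq:X-J} and \eqref{eq:X-K}; then I would show conversely that any plane $M$ satisfying both conditions is the $\Phi$-image of some $X$-submodule, invoking the converse remark after \eqref{eq:X}--\eqref{eq:gamma}.

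\begin{proof}
First suppose $X$ is an $X$-submodule as in \eqref{eq:X}, so that $M=\Phi(X)$ is spanned by the three rows
\begin{equation*}
    (a_{11},a_{12},0,b_{11},b_{12},0),\quad
    (0,a_{22},0,0,b_{22},0),\quad
    (0,0,a_{22},0,0,b_{22}),
\end{equation*}
obtained by setting $(x,y,z)$ equal to the standard basis vectors of $F^3$ in \eqref{eq:X} and applying $\Phi$ from \eqref{eq:Phi}; here $(a_{11},b_{11})\neq(0,0)$ and $(a_{22},b_{22})\neq(0,0)$. The solid $J$ is cut out by $x_3=x_6=0$, so $M\cap J$ is the common zero set inside $M$ of the single linear form obtained by reading off the $x_3,x_6$-components of a general vector of $M$; since the third spanning row has $(x_3,x_6)=(a_{22},b_{22})\neq(0,0)$ while the first two rows have $x_3=x_6=0$, this intersection is the plane spanned by the first two rows, a line. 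This line is not $L$ because its first spanning row has $(x_1,x_2)=(a_{11},a_{12})$ with $(a_{11},b_{11})\neq(0,0)$, whereas $L$ lies in $x_1=x_4=0$; this gives \eqref{eq:X-J}. Dually, $K$ is cut out by $x_1=x_4=0$, so $M\cap K$ is the zero set in $M$ of the form reading off $(x_1,x_4)$; only the first spanning row contributes, so $M\cap K$ is the line spanned by the last two rows, which are exactly the two spanning vectors of $\Phi(\alpha_0 S)$ and hence describe a line of the regulus $\cG_\alpha$ by Proposition~\ref{prop:gamma-beta-alpha}(3); this gives \eqref{eq:X-K}.

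Conversely, suppose $M\in\cG_3$ satisfies \eqref{eq:X-J} and \eqref{eq:X-K}. By \eqref{eq:X-K} the line $M\cap K$ lies in the regulus $\cG_\alpha$, so by Proposition~\ref{prop:gamma-beta-alpha}(3) it is spanned by two vectors of the form $(0,a_{22},0,0,b_{22},0)$ and $(0,0,a_{22},0,0,b_{22})$ with $(a_{22},b_{22})\neq(0,0)$. Since $\dim M=3$ and $M\cap K$ is a line, we may choose a third vector $v\in M\setminus K$; as $v\notin K$ we have $(v_1,v_4)\neq(0,0)$, and after subtracting a suitable combination of the two generators of $M\cap K$ we may normalise $v$ to the form $(a_{11},a_{12},0,b_{11},b_{12},0)$ with $(a_{11},b_{11})\neq(0,0)$. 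The reason the $x_3$- and $x_6$-coordinates of $v$ can be cleared is precisely \eqref{eq:X-J}: the line $M\cap J$ differs from $L$, which forces the plane $M$ to meet $J$ in a line whose direction can be taken inside the $x_1,x_2,x_4,x_5$-coordinates, so $v$ may be chosen in $J$. These three vectors then match the spanning rows computed above, and by the converse remark after \eqref{eq:X}--\eqref{eq:gamma} there is an $X$-submodule $X$ with $\Phi(X)=M$.

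The step I expect to be the main obstacle is the normalisation in the converse direction, specifically making precise that conditions \eqref{eq:X-J} and \eqref{eq:X-K} together force the existence of a spanning triple in exactly the shape dictated by \eqref{eq:X}. One must check that $M\cap J$ and $M\cap K$ cannot coincide or degenerate: \eqref{eq:X-K} guarantees $M\cap K$ is a genuine line of $\cG_\alpha$ (in particular $M\not\subset J$, since a plane of the regulus line would fail to meet $K$ in that regulus), while \eqref{eq:X-J} guarantees $M\cap J$ is a line $\neq L$, so $M\not\subset K$. Hence $M$ is spanned by $M\cap J$ together with any vector of $M\cap K$ off $L$, and a short linear-algebra argument fixes the coordinate shape; the care needed is in verifying the two intersection lines are distinct and jointly span $M$, which is where the hypotheses are used essentially rather than cosmetically.
\end{proof}
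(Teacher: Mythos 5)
Your proof is correct and follows essentially the same route as the paper: the direct inclusion by reading off the three spanning rows of $\Phi(X)$ and intersecting with $J$ and $K$, and the converse by recovering $a_{11},a_{12},b_{11},b_{12}$ from a point of $M\cap J$ off $L$ (a $\beta$-point) and $a_{22},b_{22}$ from the $\alpha$-line $M\cap K$, then assembling an $X$-submodule as in \eqref{eq:X}. The only cosmetic difference is that the paper phrases the reconstruction via the torsion submodules $\beta$ and $\alpha$ rather than via your explicit choice and normalisation of a third spanning vector (and indeed, as you note yourself, the cleaner move is to pick that vector directly in $M\cap J\setminus L$ instead of clearing coordinates of an arbitrary $v\in M\setminus K$).
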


\begin{proof}
Let $X$ be a free submodule as in \eqref{eq:X}. Letting $z=0$ in \eqref{eq:X}
shows that $\Phi(X)\cap J$ is a line of $J$ other than $L$. Letting $x=0$ in
\eqref{eq:X} shows that $\Phi(X)\cap K$ is a line belonging to the regulus
$\cG_\alpha$.
\par
Conversely, suppose that $M$ is a plane satisfying \eqref{eq:X-J} and
\eqref{eq:X-K}. There exists a torsion submodule $\beta$ such that
$\Phi(\beta)$ is a point on the line $M\cap L$, and a torsion submodule
$\alpha$ with $\Phi(\alpha)=M\cap K$. These submodules $\beta$ and $\alpha$ can
be written as in \eqref{eq:beta} and \eqref{eq:alpha} which gives coefficients
$a_{11},a_{12},a_{22},b_{11},b_{12},b_{22}\in F$ subject to conditions stated
there. We use these coefficients to define a submodule $X$ according to
\eqref{eq:X}. Then $\Phi(X)=M$.
\end{proof}
By the above proposition, $\cG_{X}$ is formed by all planes spanned by pairs of
lines, where one of them is from $J$ and distinct from $L$, and the other is
from the regulus $\cG_\alpha$.
\par
Recall that flats $P,Q$ are said to be \emph{incident}, in symbols $P\In Q$, if
$P\subset Q$ or $Q\subset P$. The proof of Proposition~\ref{prop:I} below
provides the lengthy description of all incident pairs $(P_0,Q)$ for a fixed
$P_0$ and a variable $Q$.

\begin{prop}\label{prop:I}
The following table displays the number of incident pairs $(P_0,Q)$, where
$P_0\in\cG$ is fixed and $Q\in\cG$ is variable:
\begin{equation*}
    \begin{array}{c|ccccc}
    \In    & Q\in\cG_X        & Q\in\cG_Y& Q\in\cG_\alpha & Q\in\cG_\beta & Q\in\cG_\gamma \\\hline
     P_0\in\cG_X     & 1        & 0    & 1        & |F|  & 1 \\
     P_0\in\cG_Y     & 0        & 1    & 1        & 0    & |F|+1 \\
     P_0\in\cG_\alpha& |F|^2+|F|& 1    & 1        & 0    & 1 \\
     P_0\in\cG_\beta & |F|+1    & 0    & 0        & 1    & 0\\
     P_0\in\cG_\gamma& |F|^2+|F|& |F|+1& 1        & 0    & 1\\
    \end{array}
\end{equation*}
\end{prop}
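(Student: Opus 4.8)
The plan is to read every entry of the table as a count of containments: since the flats involved never coincide unless they have equal dimension, $P_0\In Q$ means $P_0\subseteq Q$ when $\dim P_0<\dim Q$, means $Q\subseteq P_0$ when $\dim Q<\dim P_0$, and means $P_0=Q$ when the dimensions agree. Recalling from the displayed inclusions that $X$- and $Y$-submodules give planes, $\alpha$-submodules lines, and $\beta$- and $\gamma$-submodules points, the first task is to record the coarse positional data supplied by Propositions~\ref{prop:gamma-beta-alpha}, \ref{prop:Y} and \ref{prop:X}: the $Y$-planes, the $\alpha$-lines and the $\gamma$-points all lie inside the solid $K$; the $\gamma$-points lie on $L$, whereas the $\beta$-points lie in $J$ but off $L$, hence off $K$ (as $J\cap K=L$); and an $X$-plane lies in neither $J$ nor $K$, since it meets each of them in a mere line. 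These observations already settle many entries: whenever one type sits in $K$ while a $\beta$-point cannot, the count is $0$; and two planes are incident only if equal, which yields the diagonal $1$'s for $\cG_X,\cG_Y$ and the entry $0$ for the $\cG_X$--$\cG_Y$ pair.

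Next I would dispose of the entries equal to $1$ by unique-intersection arguments. An $X$- or $Y$-plane meets $K$ in exactly one $\alpha$-line and meets $L$ in exactly one $\gamma$-point (the proofs of Propositions~\ref{prop:Y} and \ref{prop:X} exhibit these intersections); conversely each $\alpha$-line lies in exactly one $Y$-plane, and through each $\gamma$-point, being a point of the generator $L$ of $H$, there passes exactly one line of the opposite regulus $\cG_\alpha$. All of this rests on the standard facts that two lines from opposite reguli of the hyperbolic quadric $H$ meet in a single point, and that through each point of a generator there is a unique transversal generator. These account for every remaining $1$ in the table.

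Then come the entries $|F|$ and $|F|+1$. For $(\cG_X,\cG_\beta)$ I would use that $M\cap J$ is a line $\ne L$ meeting $L$ in a single point, so it carries $|F|$ points off $L$, which are precisely the $\beta$-points on $M$. For the pair involving $\cG_Y$ and $\cG_\gamma$ I would invoke $L\subset N$ for every $Y$-plane $N$: each $Y$-plane then contains all $|F|+1$ points of $L$, and dually each $\gamma$-point lies on all $|F|+1$ members of the pencil $[L,K]_3=\cG_Y$. For $(\cG_\beta,\cG_X)$ I would note that an $X$-plane through a $\beta$-point $p$ meets $J$ in a line through $p$ that also meets $L$; such lines are exactly the $|F|+1$ lines through $p$ in the plane $\langle p,L\rangle$, and each determines, via the unique opposite-regulus line through its trace on $L$, a distinct $X$-plane.

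The hard part will be the two entries equal to $|F|^2+|F|$, namely the number of $X$-planes through a fixed $\alpha$-line and through a fixed $\gamma$-point. Here I would first make explicit the structural description implicit in Proposition~\ref{prop:X}: every $X$-plane $M$ is the span of the line $M\cap J$ (which is $\ne L$) and the $\alpha$-line $M\cap K$, two lines meeting in the single point $M\cap L$. To count the $X$-planes containing a given $\alpha$-line $P_0$, I would pass to the quotient and identify the planes through $P_0$ with the points of $\bP(F^6/P_0)\cong\bP^3$; the requirement that $M$ be an $X$-plane translates, in adapted coordinates on this $\bP^3$, into one linear condition (that $M\cap K=P_0$ exactly, i.e.\ $M\not\subseteq K$) together with the exclusion of the single plane for which $M\cap J$ degenerates to $L$, leaving a $\bP^2$ minus one point, that is $|F|^2+|F|$. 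For a fixed $\gamma$-point $p$ the opposite-regulus line through $p$ is forced, while $M\cap J$ ranges over all lines of $J$ through $p$ other than $L$, again a $\bP^2$ minus one point. Since $\GL_2(T)$ acts transitively on each of $\cG_\alpha$ and $\cG_\gamma$, these counts are independent of the chosen line or point, so it suffices to verify them on the representatives $\alpha_0,\gamma_0$; carrying out this last coordinate computation cleanly, and confirming that the transposed entries agree, is the main obstacle.
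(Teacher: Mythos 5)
Your plan is essentially the paper's own proof: a case-by-case enumeration of all incident pairs using the structural facts about $J$, $K$, $L$, $H$, the regulus $\cG_\alpha$ and the pencil $\cG_Y=[L,K]_3$, and every entry of the table comes out with the correct description and count. One step is misstated, though, in the $(\cG_\alpha,\cG_X)$ count: the linear condition that cuts the $\bP^3$ of planes through $P_0$ down to a $\bP^2$ is \emph{not} ``$M\not\subseteq K$'' (that removes only a $\bP^1$ from $\bP^3$ and is not linear), but rather ``$M\cap J$ is a line'', equivalently $M\subseteq P_0+J$; within the resulting pencil-plane $[P_0,P_0+J]_3\cong\bP^2$ the conditions $M\subseteq K$ and $M\cap J=L$ both single out the same unique plane $P_0+L$, whose removal gives $|F|^2+|F|$. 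Taken literally, your stated conditions would overcount. The fix is already implicit in your own observation that $M=(M\cap J)+(M\cap K)=(M\cap J)+P_0\subseteq J+P_0$. A similar nicety: a $Y$-plane does not ``meet $K$ in an $\alpha$-line'' (it lies inside $K$); the correct statement, which your quoted facts about reguli do yield, is that each plane of $[L,K]_3$ contains exactly one line of $\cG_\alpha$.
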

\begin{proof}
We sketch the proof by completely describing all possibilities.
\par
(a) Let $P_0$ in $\cG_X$. Then $P_0\In Q\in\cG_X\cup\cG_Y$ is equivalent to
$Q=P_0$. $P_0\In Q\in\cG_\alpha$ is equivalent to $Q=P_0\cap K$. $P_0\In
Q\in\cG_\beta$ holds if, and only if, $Q$ is one of the $|F|$ points on
$P_0\cap J$ other than $P_0\cap L$. $P_0\In Q\in\cG_\gamma$ is equivalent to
$Q=P_0\cap L$.
\par
(b) Let $P_0$ in $\cG_Y$. Then $P_0\In Q\in\cG_X\cup\cG_Y$ is equivalent to
$Q=P_0$. $P_0\In Q\in\cG_\alpha$ holds if, and only if, $Q$ is the only
generator other than $L$ of the hyperbolic quadric $H$ which belongs to the
plane $P_0$. $P_0\In Q\in\cG_\beta$ is impossible. $P_0\In Q\in\cG_\gamma$ is
equivalent to $Q$ being one of the $|F|+1$ points on $L$.

\par
(c) Let $P_0$ in $\cG_\alpha$. Then $P_0\In Q\in\cG_X$ is equivalent to $Q$
being one of the $|F|^2+|F|$ planes of $[P_0,P_0+J]_3$ other than $P_0+L$.
$P_0\In Q\in\cG_Y$ is equivalent to $Q=P_0+L$. $P_0\In Q\in\cG_\alpha$ is
equivalent to $Q=P_0$. $P_0\In Q\in\cG_\beta$ is impossible. $P_0\In
Q\in\cG_\gamma$ is equivalent to $Q=P_0\cap L$.

\par
(d) Let $P_0$ in $\cG_\beta$. Then $P_0\In Q\in\cG_X$ is equivalent to $Q$
being one of the $|F|+1$ planes of the form $P_0+R$, where $R$ ranges in the
regulus $\cG_\alpha$. $P_0\In Q\in\cG \setminus \cG_X$ is equivalent to
$Q=P_0$.
\par
\par
(e) Let $P_0$ in $\cG_\gamma$. Then $P_0\In Q\in\cG_\alpha$ is equivalent to
$Q=R_0$, where $R_0$ denotes the only generator other than $L$ of the
hyperbolic quadric $H$ through the point $P_0$. (This line $R_0$ is also used
in the next two subcases.) $P_0\In Q\in\cG_X$ is equivalent to $Q$ being one of
the $|F|^2+|F|$ planes of $[R_0,J+R_0]_3$ other than $R_0+L$. $P_0\In
Q\in\cG_Y$ is equivalent to $Q\in\cG_Y$, which has $|F|+1$ elements. $P_0\In
Q\in\cG_\beta$ is impossible. $P_0\In Q\in\cG_\gamma$ is equivalent to $Q=P_0$.
\end{proof}

\begin{rem}\label{rem:1}
If we associate with each $M\in\cG_X$ the line $M\cap J$ then a line model for
the set of unimodular points is obtained. This is just the special linear
complex mentioned at the beginning of Section~\ref{se:represent} with $L$ being
the axis of the complex.
\end{rem}

\section{Adjacency}

For any $k\in\{1,2,3,4,5 \}$ the flats $Z_1,Z_2\in\cG_k$ are called
\emph{adjacent}, in symbols $Z_1\sim Z_2$, if their intersection is
$(k-1)$-dimensional. It will also be convenient to write $Z_1\cong Z_2$ for
elements which are adjacent or identical. The cases $k=1$ and $k=5$ are
trivial, because any two distinct points and any two distinct hyperplanes are
adjacent, whereas in the remaining cases the entire geometry of the
Grassmannian $\cG_k$ can be based solely on adjacency due to the famous theorem
of Chow. See, \emph{e.~g.}, Chapter~3 in \cite{pankov-10a} for more
information.
\par
Our aim is to exhibit the restriction of the adjacency relation to the sets
$\cG_X$, $\cG_Y$, and $\cG_X\cup\cG_Y$. They represent the sets of unimodular
points, non-unimodular points, and all points of the projective line over $T$.
As before, the flats $J$, $K$, $L$, and the hyperbolic quadric $H$ defined in
\eqref{eq:J}--\eqref{eq:H} will be of great importance. Given $M_1,M_2\in\cG_X$
we have
\begin{equation}\label{eq:adj_X}
    M_1\cong M_2\;\;\; \Leftrightarrow\;\;\; M_1\cap K = M_2\cap K,
\end{equation}
since for $M_1\neq M_2$ either side of \eqref{eq:adj_X} is equivalent to
$M_1+M_2$ being a solid, whereas for $M_1=M_2$ \eqref{eq:adj_X} holds
trivially. We infer from \eqref{eq:adj_X} that $\cong$ is an equivalence
relation on $\cG_X$. The equivalence classes are of the form
\begin{equation}\label{eq:class_X}
    [P,P+J]_3\setminus\{P+L\}\mbox{~~with~~} P\in\cG_\alpha.
\end{equation}
So there is a one-one correspondence between these equivalence classes and the
lines of the regulus $\cG_\alpha$.
\par
We now consider the adjacency relation on $\cG_X\cup\cG_Y$. Each plane
$M\in\cG_X$ is adjacent with precisely one plane of $\cG_Y$, namely
\begin{equation*}
    N:=(M\cap K)+L.
\end{equation*}
As $\cG_Y$ is a pencil of planes, we have $N_1\cong N_2$ for all
$N_1,N_2\in\cG_Y$. Consequently, the pencil $[L,K]_3$ and the subsets
\begin{equation}\label{eq:cliques_XY}
    [P,P+J]_3\;\;\mbox{with}\;\; P\in\cG_\alpha
\end{equation}
are the \emph{cliques\/} of $\cG_X\cup\cG_Y$ with respect to adjacency. That
means, these are the maximal subsets of $\cG_X\cup\cG_Y$ for which any two
distinct elements are adjacent. It is well known that the elements of any
clique \eqref{eq:cliques_XY} and the pencils contained in it can be considered
as the ``points'' and ``lines'' of a projective plane. Thus any equivalence
class from \eqref{eq:class_X} can be regarded as a \emph{punctured projective
plane}, \emph{i.~e.}, a projective plane with one point removed. Going over
from $\cG_X$ to $\cG_X\cup\cG_Y$ provides the ``closure'' of all these
punctured projective planes.
\par

Another advantage of $\cG_X\cup\cG_Y$ over $\cG_X$ is \emph{connectedness\/}
with respect to $\sim$, \emph{i.~e.}, given any $Z,Z'\in \cG_X\cup \cG_Y$ there
exists a finite sequence $Z_0, Z_1,\ldots, Z_r$ of planes from $\cG_X\cup\cG_Y$
such that
\begin{equation*}
    Z=Z_0\sim Z_1\sim\cdots\sim Z_r = Z'.
\end{equation*}
The minimal $r\geq 0$ for which such a sequence exists is said to be the
\emph{distance} between $Z$ and $Z'$. Any two \emph{distinct\/} planes
$M_1,M_2\in \cG_X$ are either at distance $1$ (adjacent) or at distance $3$,
because for $M_1\not\sim M_2$ we have
\begin{equation*}
    M_1\sim (M_1\cap K)+L \sim (M_2\cap K)+L \sim M_2,
\end{equation*}
and this is the only shortest sequence from $M_1$ to $M_2$. Yet, this distance
function is of restricted use, since in the latter case $M_1\cap M_2$ may be
$0$ (the ``empty flat'') or a single point.
\par
Any bijection of $\cG_X\cup\cG_Y$ onto itself which preserves adjacency in both
directions will be called an \emph{adjacency preserver} of $\cG_X\cup\cG_Y$.
The straightforward proof of the following result is left to the reader:

\begin{prop}\label{prop:adj}
Let $\mu:\cG_\alpha\to\cG_\alpha$ be a bijection. For each $P\in\cG_\alpha$ we
choose a bijection
\begin{equation}\label{eq:psi_P}
    \psi_P: [P,P+J]_3 \to [\mu(P),\mu(P)+J]_3 \;\;\;\;\mbox{such that}\;\;\;\;
    P+L  \mapsto \mu(P)+L.
\end{equation}
Then the mapping
\begin{equation}\label{eq:lambda}
    \lambda:\cG_X\cup\cG_Y\to\cG_X\cup\cG_Y : Z \mapsto \psi_P(Z)
    \;\;\;\;\mbox{if}\;\;\;\;Z\in [P,P+J]_3
\end{equation}
is a well-defined adjacency preserver of $\cG_X\cup\cG_Y$. Conversely, every
adjacency preserver of $\cG_X\cup\cG_Y$ arises in this way.
\end{prop}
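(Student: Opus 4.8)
The plan is to deduce everything from the clique structure recorded just before the statement: the maximal cliques of $\cG_X\cup\cG_Y$ for $\sim$ are the pencil $\cG_Y=[L,K]_3$ together with the sets $C_P:=[P,P+J]_3$ with $P\in\cG_\alpha$ from \eqref{eq:cliques_XY}. The first thing I would record is that the $C_P$ \emph{partition} $\cG_X\cup\cG_Y$: by Proposition~\ref{prop:X} an $X$-plane $M$ lies in $C_P$ exactly when $P=M\cap K$, while the sole $Y$-plane of $C_P$ is $P+L$; and by Proposition~\ref{prop:I}(b) the plane $P+L$ contains a unique line of the regulus $\cG_\alpha$, namely $P$, so it lies in no $C_{P'}$ with $P'\neq P$. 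Hence each $C_P$ meets $\cG_Y$ in the single plane $P+L$, whereas distinct $C_P$ are disjoint. Since $\cG_\alpha$ is a regulus and thus has $|F|+1\geq 2$ members, $\cG_Y$ meets more than one other maximal clique, while each $C_P$ meets exactly one (namely $\cG_Y$). Thus $\cG_Y$ is characterised among the maximal cliques as the unique one meeting more than one other; this intrinsic, field-independent description is what forces any adjacency preserver to respect it.

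For the direct implication I would verify that $\lambda$ is well defined, bijective and adjacency preserving. Well-definedness is immediate, since the $C_P$ partition the domain and \eqref{eq:lambda} uses exactly this partition. Each $\psi_P$ is a bijection $C_P\to C_{\mu(P)}$ taking the $Y$-plane $P+L$ to the $Y$-plane $\mu(P)+L$, so $\lambda$ sends $X$-planes to $X$-planes, restricts on $\cG_Y$ to the map $P+L\mapsto\mu(P)+L$, and carries $C_P$ onto $C_{\mu(P)}$; as $\mu$ permutes $\cG_\alpha$, the images $C_{\mu(P)}$ again partition $\cG_X\cup\cG_Y$, whence $\lambda$ is a bijection. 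Consequently $\lambda$ maps the family of maximal cliques bijectively onto itself, and so does $\lambda^{-1}$. Since two distinct planes are adjacent precisely when they lie in a common maximal clique, $\lambda$ preserves $\sim$ in both directions and is an adjacency preserver.

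For the converse I would start from an arbitrary adjacency preserver $\phi$. As a bijection preserving $\sim$ in both directions it permutes the maximal cliques and respects their intersection pattern, so by the characterisation above $\phi(\cG_Y)=\cG_Y$ and $\phi$ permutes the family $\{C_P\}$. Pulling this permutation back through $P\mapsto C_P$ yields a bijection $\mu:\cG_\alpha\to\cG_\alpha$ with $\phi(C_P)=C_{\mu(P)}$. Setting $\psi_P:=\phi|_{C_P}$ gives a bijection $C_P\to C_{\mu(P)}$, and from $C_P\cap\cG_Y=\{P+L\}$ together with $\phi(\cG_Y)=\cG_Y$ we obtain $\phi(P+L)\in C_{\mu(P)}\cap\cG_Y=\{\mu(P)+L\}$, so $\psi_P$ satisfies \eqref{eq:psi_P}. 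Reading off \eqref{eq:lambda} then gives $\phi=\lambda$, as required.

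The one genuinely delicate point is the intrinsic recognition of $\cG_Y$. The naive separation by size — a pencil has $|F|+1$ planes, each $C_P$ has $|F|^2+|F|+1$ — only works for finite $F$; I therefore single out $\cG_Y$ by the purely incidence-theoretic property of meeting more than one other maximal clique, which is preserved by every adjacency preserver regardless of $|F|$. Once $\cG_Y$ is pinned down, the remainder is routine bookkeeping with the partition $\{C_P\}$ and the one-point overlaps $C_P\cap\cG_Y=\{P+L\}$.
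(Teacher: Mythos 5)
Your argument is correct and is essentially the proof the paper has in mind: the paper explicitly leaves it to the reader after having recorded exactly the ingredients you use, namely that the maximal cliques are $[L,K]_3$ and the sets $[P,P+J]_3$, that the latter partition $\cG_X\cup\cG_Y$ with one-point overlaps $\{P+L\}$ with $\cG_Y$, and that adjacency of distinct planes is equivalent to membership in a common maximal clique. Your observation that $\cG_Y$ must be recognised by its intersection pattern with the other maximal cliques rather than by cardinality (which fails for infinite $F$) is the one genuinely delicate point, and you handle it correctly.
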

Due to the condition on $\psi_P$ stated at the end of formula \eqref{eq:psi_P}
we obtain the following:
\begin{cor}\label{cor:1}
Every adjacency preserver $\lambda$ of $\cG_X\cup\cG_Y$ satisfies
$\lambda(\cG_X)=\cG_X$ and $\lambda(\cG_Y)=\cG_Y$.
\end{cor}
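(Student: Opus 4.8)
The plan is to read off both equalities directly from the structure of an arbitrary adjacency preserver as given in Proposition~\ref{prop:adj}. By that proposition, every adjacency preserver $\lambda$ is determined by a bijection $\mu:\cG_\alpha\to\cG_\alpha$ together with a choice of bijections $\psi_P:[P,P+J]_3\to[\mu(P),\mu(P)+J]_3$, and $\lambda$ acts on $Z\in[P,P+J]_3$ by $\lambda(Z)=\psi_P(Z)$. So the task reduces to understanding how the two types of planes, those in $\cG_X$ and those in $\cG_Y$, sit inside the pencils $[P,P+J]_3$.

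First I would recall the characterisations obtained earlier. By Proposition~\ref{prop:Y}, the set $\cG_Y$ is the single pencil $[L,K]_3$, and its planes are exactly those containing the line $L$. On the other hand, the cliques of $\cG_X\cup\cG_Y$ were identified as the pencils $[P,P+J]_3$ with $P\in\cG_\alpha$ (together with $\cG_Y$ itself). Within such a pencil, exactly one plane contains $L$, namely $P+L$, and this plane lies in $\cG_Y$; every other plane of the pencil lies in $\cG_X$. Concretely, for $P\in\cG_\alpha$ the pencil $[P,P+J]_3$ meets $\cG_Y$ in the single plane $P+L$ and meets $\cG_X$ in the remaining $|F|$ planes, in agreement with the equivalence classes \eqref{eq:class_X}, which are precisely $[P,P+J]_3\setminus\{P+L\}$.

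Next I would invoke the defining constraint on the $\psi_P$. The condition stated at the end of \eqref{eq:psi_P} requires $\psi_P(P+L)=\mu(P)+L$. Since $P+L$ is the unique member of $[P,P+J]_3$ lying in $\cG_Y$ and $\mu(P)+L$ is the unique member of $[\mu(P),\mu(P)+J]_3$ lying in $\cG_Y$, this single normalisation says exactly that $\psi_P$ carries the one $\cG_Y$-plane of its source pencil to the one $\cG_Y$-plane of its target pencil. Assembling these over all $P\in\cG_\alpha$, we get $\lambda(\cG_Y)\subseteq\cG_Y$; and because $\mu$ is a bijection of $\cG_\alpha$ onto itself, every target pencil is hit, so $\lambda(\cG_Y)=\cG_Y$.

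Finally, since each $\psi_P$ is a bijection between the two pencils and maps the distinguished point $P+L$ to $\mu(P)+L$, it necessarily maps the complementary set $[P,P+J]_3\setminus\{P+L\}$ bijectively onto $[\mu(P),\mu(P)+J]_3\setminus\{\mu(P)+L\}$; in view of the above identification of these complements with the equivalence classes inside $\cG_X$, this gives $\lambda(\cG_X)=\cG_X$. The argument is essentially bookkeeping across the decomposition of $\cG_X\cup\cG_Y$ into cliques, so I expect no genuine obstacle; the only point deserving care is to confirm that $P+L$ really is the \emph{only} plane of $[P,P+J]_3$ lying in $\cG_Y$, which follows because a plane containing both $P\in\cG_\alpha$ and $L$ lies in $\cG_Y$ exactly when it contains $L$, and within a pencil based at $P$ there is a unique such plane.
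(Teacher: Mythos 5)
Your argument is correct and is exactly the paper's justification: the corollary is presented there as an immediate consequence of the normalisation $\psi_P(P+L)=\mu(P)+L$ in \eqref{eq:psi_P}, and your write-up simply makes explicit the bookkeeping that $P+L$ is the unique $\cG_Y$-plane in $[P,P+J]_3$ while the rest of that clique lies in $\cG_X$. One cosmetic slip that does not affect the reasoning: $[P,P+J]_3$ is a clique but not a \emph{pencil} in the paper's sense (since $P+J$ is a hyperplane, not a solid), and it contains $|F|^2+|F|$ planes of $\cG_X$ rather than $|F|$.
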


If we impose the much stronger requirement that a bijection of $\cG_X\cup\cG_Y$
onto itself should map pencils of planes onto pencils of planes then all
bijections from \eqref{eq:psi_P} have to be collineations between the
underlying projective planes, but there need not be any relationship between
these collineations. So even here we obtain transformations as in
\eqref{eq:lambda} that do not really deserve our interest.
\par
Still, we have the following description of adjacency preservers which arise
from semilinear bijections. Recall that via the bijection \eqref{eq:Phi} the
points of the projective line over $T$ correspond to the planes belonging to
$\cG_X\cup\cG_Y$.

\begin{theorem}\label{thm:1}
For all mappings $g:T^2\to T^2$ and all mappings $f:F^6\to F^6$ such that
$\Phi\circ g = f\circ \Phi$ the following statements are equivalent.
\begin{enumerate}
  \item[(i)] $g$ is a $T$-semilinear bijection of the module $T^2$.
  \item[(ii)] $f$ is an $F$-semilinear bijection of the vector space $F^6$
      satisfying $f(\cG_X\cup\cG_Y)=\cG_X\cup\cG_Y$.
    \item[(iii)] $f$ is an $F$-semilinear bijection of the vector space
        $F^6$ satisfying $f(\cG_X)=\cG_X$.
  \item[(iv)] $f$ is an $F$-semilinear bijection of the vector space $F^6$
      satisfying $f(J)=J$ and $f(H)=H$.
\end{enumerate}
\end{theorem}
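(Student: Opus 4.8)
The plan is to establish the four statements as equivalent through the cycle (i)$\Rightarrow$(ii)$\Rightarrow$(iii)$\Rightarrow$(iv)$\Rightarrow$(i). Two preliminary remarks organise everything: $\Phi$ is $F$-linear, since the centre $F$ acts as scalars, so $f=\Phi\circ g\circ\Phi^{-1}$ is additive exactly when $g$ is, and a field automorphism governing one of $g,f$ transports to the other. For (i)$\Rightarrow$(ii), a $\sigma$-semilinear bijection $g$ carries each cyclic submodule $Tv$ onto $T\,g(v)$ (because $\sigma$ is surjective) and preserves freeness; hence it permutes the free cyclic submodules, giving $f(\cG_X\cup\cG_Y)=\cG_X\cup\cG_Y$, while $F$-semilinearity of $f$ with field automorphism $\sigma|_F$ follows from centrality of $F$. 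For (ii)$\Rightarrow$(iii), any $F$-semilinear bijection of $F^6$ preserves dimensions of intersections, so its restriction to $\cG_X\cup\cG_Y$ preserves adjacency in both directions; being a bijection of that set by hypothesis, it is an adjacency preserver, and Corollary~\ref{cor:1} forces $f(\cG_X)=\cG_X$.

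For (iii)$\Rightarrow$(iv) I would reconstruct $\cG_\alpha$, $H$ and $J$ from $\cG_X$ alone. By \eqref{eq:adj_X} the maximal adjacency cliques of $\cG_X$ are the punctured families \eqref{eq:class_X}, one for each $P\in\cG_\alpha$, and the common intersection of the planes in such a clique is precisely the line $P$; thus $\cG_\alpha$ is intrinsic, and since $f$ preserves $\cG_X$ and adjacency it permutes the cliques and fixes $\cG_\alpha$ setwise. Then $H=\bigcup_{P\in\cG_\alpha}P$ as a point set (a regulus sweeps out its quadric), so $f(H)=H$. Finally, the span of the clique \eqref{eq:cliques_XY} attached to $P$ is the hyperplane $P+J$, and distinct $P$ give distinct hyperplanes (they cut $K$ in the distinct planes $P+L$); hence $J=\bigcap_{P\in\cG_\alpha}(P+J)$ is recovered and $f(J)=J$.

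The hard direction is (iv)$\Rightarrow$(i). I would reformulate it as a normaliser condition. Write $\ell_t\colon F^6\to F^6$ for the $F$-linear map induced via $\Phi$ by left multiplication $v\mapsto tv$ on $T^2$, and put $\cA=\{\ell_t : t\in T\}$; this is a faithful copy of the regular representation of $T$ acting as two identical blocks on the two copies of $\Phi(T)$ inside $F^6$. The relation $\Phi\circ g=f\circ\Phi$ shows that $g$ is $\sigma$-semilinear for a ring automorphism $\sigma$ of $T$ \emph{if and only if} $f\,\ell_t\,f^{-1}=\ell_{\sigma(t)}$ for all $t$, i.e. if and only if $f$ normalises $\cA$; moreover, once $f\cA f^{-1}=\cA$ is known, the prescription $\ell_{\sigma(t)}:=f\ell_t f^{-1}$ automatically defines a ring automorphism inducing $\sigma|_F=\tau$ (using that $f$ is $\tau$-semilinear). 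Thus (iv)$\Rightarrow$(i) reduces entirely to proving $f\cA f^{-1}=\cA$. From the hypotheses one first extracts the linear data: $f(J)=J$, $f(K)=K$ (as $K=\langle H\rangle$) and $f(L)=L$ (as $L=J\cap K$) pin down the triangular shape of $f$ compatible with the left ideals $Te_1,Te_2$ and the radical of $T$ that $J,K,L$ encode.

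I expect the one genuinely substantial step to be turning the invariance of the quadric into the algebraic statement $f\cA f^{-1}=\cA$. Concretely, $f(H)=H$ together with $f(\cG_\alpha)=\cG_\alpha$ and $f(L)=L$ forces $f|_K$ to be a regulus-preserving collineation of the Segre quadric, hence of the form $M\mapsto A\,M^{\tau}B$ on the $2\times2$ matrix $M$ of $K$-coordinates, with $A,B$ constrained by the invariance of $L$; the remaining work is to verify that these matrices and the induced action of $f$ on $J$ assemble into a single normaliser element of $\cA$, equivalently that $f$ is built from a $\GL_2(T)$-part composed with an inner and a field automorphism of $T$. I anticipate the bookkeeping to reproduce the block pattern \eqref{eq:block6}, with the right-hand factor $B$ accounting exactly for the inner-automorphism freedom; disentangling this coupling of the two regular-representation blocks, which the linear conditions on $J,K,L$ alone cannot supply, is the crux of the argument.
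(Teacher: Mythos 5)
Your implications (i)$\Rightarrow$(ii)$\Rightarrow$(iii) are correct and essentially the paper's argument. Your (iii)$\Rightarrow$(iv) is correct but takes a genuinely different route: instead of the paper's two lemmas characterising the lines (resp.\ solids) that meet every $M\in\cG_X$ in a point (resp.\ a line) --- which identify the opposite regulus and then $K$ and $J$ --- you recover $\cG_\alpha$ intrinsically as the common intersections of the maximal adjacency cliques \eqref{eq:class_X}, get $H$ as the union of that regulus, and get $J$ as the intersection of the clique spans $P+J$. This works (the cliques have at least two elements, so their intersection is exactly $P$, and removing one plane from $[P,P+J]_3$ still leaves a spanning set), and it is arguably more self-contained than the paper's route.

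The genuine gap is in (iv)$\Rightarrow$(i), which is the hard direction. Your reduction to the normaliser condition $f\cA f^{-1}=\cA$ for the left regular representation $\cA=\{\ell_t\}$ is a correct and equivalent reformulation (conjugation by an additive bijection normalising a faithful copy of $T$ does define a ring automorphism $\sigma$, and $\Phi\circ g=f\circ\Phi$ then makes $g$ $\sigma$-semilinear). But you then only \emph{announce} the decisive step: ``I expect the one genuinely substantial step to be turning the invariance of the quadric into the algebraic statement $f\cA f^{-1}=\cA$,'' and ``I anticipate the bookkeeping to reproduce the block pattern \eqref{eq:block6}.'' That anticipated bookkeeping is precisely the content of the implication; without it nothing is proved. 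The paper does carry it out, by an explicit three-fold factorisation $f=f_3\circ f_2\circ f_1$: first strip off the entrywise field automorphism $f_1$; then observe that $f\circ f_1^{-1}$ permutes the lines of the regulus opposite to $\cG_\alpha$ (this regulus is preserved, not swapped, because $L=J\cap K$ is fixed and lies in it --- a point you assert but do not derive from (iv)), and compose with a homothety $f_2=\diag(1,G,1,G)$, $G^t\in T$, realising that permutation while fixing $J$ pointwise; the residual map $f_3$ then fixes $J$, $K$ and \emph{every} line of the opposite regulus, and it is this last, pointwise-on-the-regulus invariance that forces the matrix of $f_3$ into the shape \eqref{eq:block6}, i.e.\ into $\GL_2(T)$. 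Your sketch stops exactly where this ``disentangling of the two regular-representation blocks'' (your own phrase) has to be performed, so as it stands the equivalence with (i) is not established.
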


We postpone the proof until we have established two lemmas.

\begin{lemma}\label{lem:line}
A line $Q$ has the property that for all $M\in\cG_X$ the intersection $Q\cap M$
is a point if, and only if, $Q$ belongs to the opposite regulus of
$\cG_\alpha$.
\end{lemma}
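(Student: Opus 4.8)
The plan is to use the description of $\cG_X$ from Proposition~\ref{prop:X}: every $M\in\cG_X$ is spanned by the two lines $M\cap K\in\cG_\alpha$ and $M\cap J$, which meet in the single point $M\cap L$. Throughout I write $\mathcal R$ for the regulus of $H$ opposite to $\cG_\alpha$; by Proposition~\ref{prop:gamma-beta-alpha} this is the regulus containing $L$, so the claim is that $Q$ meets every $M\in\cG_X$ in a point exactly when $Q\in\mathcal R$.

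For the easy implication, assume $Q\in\mathcal R$. Then $Q\subset K$, so for every $M\in\cG_X$ one has $Q\cap M=Q\cap(M\cap K)$. Since $M\cap K$ is a generator of $H$ lying in $\cG_\alpha$ while $Q$ lies in the opposite regulus, these two generators meet in exactly one point; hence $Q\cap M$ is a point, as required.

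For the converse I would argue by contraposition in two stages. Assume $Q$ meets every $M\in\cG_X$ in a point. The decisive claim is that $Q\subset K$. Once this is known the proof finishes quickly: as above $Q\cap M=Q\cap(M\cap K)$, and as $M$ runs through $\cG_X$ the line $M\cap K$ runs through all of $\cG_\alpha$ (Proposition~\ref{prop:X}); thus $Q$ meets every generator in $\cG_\alpha$ in exactly one point. A line of the solid $K$ with this property cannot itself lie in $\cG_\alpha$, since distinct generators of one regulus are skew; it is therefore a common transversal of three pairwise skew generators of $\cG_\alpha$ and so belongs to the opposite regulus $\mathcal R$.

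The heart of the matter, and the step I expect to be the main obstacle, is establishing $Q\subset K$. I would use the projection $\pi\colon F^6\to F^2$, $\pi(x)=(x_1,x_4)$, whose kernel is exactly $K$. From the spanning vectors of $M$ in Proposition~\ref{prop:X} one reads off that $\pi(M)$ is a single direction, which may be prescribed arbitrarily and independently of the generator $M\cap K$. Suppose $Q\not\subset K$, so $\pi(Q)\neq 0$. If $\dim\pi(Q)=1$, then $Q\cap K$ is a single point $r$; choosing the direction of $M$ outside $\pi(Q)$ forces any point of $Q\cap M$ into $Q\cap K\cap(M\cap K)$, and since $r$ lies on at most one generator of $\cG_\alpha$ I can select $M$ whose generator avoids $r$, giving $Q\cap M=\emptyset$. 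If $\dim\pi(Q)=2$, then $\pi|_Q$ is injective and each admissible direction $\bar q$ determines a unique point $v(\bar q)\in Q$ above it; a short computation with the spanning vectors shows that $v(\bar q)\in M$ forces the coordinate pair $(v_3,v_6)$ of $v(\bar q)$ to be proportional to the parameter $(s,t)$ of $M\cap K$. Choosing $\bar q$ with $v(\bar q)\notin J$ (all but at most one direction, unless $Q\subset J$) and then $(s:t)$ not proportional to $(v_3,v_6)$ yields $Q\cap M=\emptyset$. The residual case $Q\subset J$ is handled directly: there $Q\cap M=Q\cap(M\cap J)$, and since $Q\cap L=\emptyset$ one can pick a line $M\cap J$ through a point of $L$ that is skew to $Q$ inside the solid $J$, again producing an empty intersection. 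In every case a plane of $\cG_X$ disjoint from $Q$ appears, contradicting the hypothesis; hence $Q\subset K$, completing the proof.
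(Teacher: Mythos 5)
Your proof is correct, but the hard direction is argued quite differently from the paper. The paper fixes a generator $P\in\cG_\alpha$ and considers the family $[P,P+J]_3\cap\cG_X$ of $|F|^2+|F|$ planes through $P$; if $Q$ missed $P$, each such plane $M$ would equal $P+(M\cap Q)$ and hence lie in the solid $P+Q$, which can accommodate only $|F|+1$ planes through $P$ --- a contradiction. Thus $Q$ meets every generator of $\cG_\alpha$ and is therefore a line of the opposite regulus; the containment $Q\subset K$ is never isolated as a separate step. You instead make $Q\subset K$ the pivot, proving it by contradiction via the projection $(x_1,x_4)$ with kernel $K$ and a three-way case analysis ($\dim\pi(Q)=1$; $\dim\pi(Q)=2$ with $Q\not\subset J$; $Q\subset J$), in each case exhibiting an explicit $M\in\cG_X$ disjoint from $Q$ by exploiting the independence of the direction $(a_{11}:b_{11})$ and the generator $(a_{22}:b_{22})$ in \eqref{eq:X}. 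Both arguments are sound and both end with the same standard fact that a common transversal of three skew generators of one regulus of $H$ lies in the opposite regulus. The paper's route is shorter and coordinate-free; yours is more computational but makes the role of $K$ explicit (nicely foreshadowing Lemma~\ref{lem:solid}) and shows concretely how to manufacture a plane of $\cG_X$ avoiding any line not contained in $K$. The only points you leave implicit --- that the line chosen in the residual case actually occurs as $M\cap J$ for some $M\in\cG_X$ (it must meet $L$, and then Proposition~\ref{prop:X} supplies $M=N+P$ with $P$ the generator of $\cG_\alpha$ through $N\cap L$), and the small-field counts --- all check out.
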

\begin{proof}

(a) Let $Q$ be a line such that $Q\cap M$ is a point for all $M\in\cG_X$. We
first choose a fixed $P\in\cG_\alpha$ and define the hyperplane $W:=P+J$. Then
all but one planes of $[P,W]_3$ belong to $\cG_X$; the only exception is
$P+L\in\cG_Y$. We claim that $Q\cap P$ contains a point. Assume to the contrary
that this were not the case. So, for all $M\in[P,W]_3\cap\cG_X$, the point
$Q\cap M$ would be off the line $P$. This would imply that all such $M$ could
be written as $M=P+(M\cap Q)$, whence all of them would be contained in the
solid $P+Q$, a contradiction.
\par
Now, as $P$ ranges in the regulus $\cG_\alpha$, we see that $Q$ is a line of
the regulus opposite to $\cG_\alpha$.
\par
(b) The converse is obviously true.
\end{proof}

\begin{lemma}\label{lem:solid}
A solid $V$ has the property that for all $M\in\cG_X$ the intersection $V\cap
M$ is a line if, and only if, either $V=J$ or $V=K$.
\end{lemma}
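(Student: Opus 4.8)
The plan is to prove a biconditional characterising the solids $V$ that meet every $X$-plane in a line. The ``if'' direction is routine, so the real content is the ``only if'' direction: I must show that the meeting-in-a-line property forces $V=J$ or $V=K$. I would begin by exploiting the structure of $\cG_X$ established in Proposition~\ref{prop:X} together with the cliques \eqref{eq:class_X}: the planes of $\cG_X$ come in equivalence classes $[P,P+J]_3\setminus\{P+L\}$ indexed by lines $P\in\cG_\alpha$, each class living inside a hyperplane $P+J$. This organisation is what lets me extract strong constraints on $V$ from the global intersection condition.

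First I would record the dimension arithmetic inside $F^6$. For a solid $V$ (dimension $4$) and a plane $M$ (dimension $3$), $\dim(V\cap M)\ge 4+3-6=1$, so $V\cap M$ is always at least a point; the condition ``$V\cap M$ is a line'' is exactly the requirement that $V+M$ is a hyperplane rather than all of $F^6$, \emph{i.~e.}, that $M$ does not span $F^6$ together with $V$. The next step is to feed in the rich supply of planes in a single clique. Fix $P\in\cG_\alpha$ and look at the planes of $[P,P+J]_3\cap\cG_X$, which all sit in the hyperplane $W:=P+J$. If $V$ is not contained in $W$, then $V\cap W$ is a solid of $W$; requiring $V\cap M$ to be a line for the whole pencil of planes $M$ through $P$ inside $W$ should, by an argument parallel to part (a) of Lemma~\ref{lem:line}, force $V\cap W$ to contain the line $P$ (otherwise the planes $M$ would all lie in a fixed solid, contradicting that they exhaust a pencil). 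Letting $P$ range over the whole regulus $\cG_\alpha$ then shows $V$ must contain every line of that regulus, hence contain the hyperbolic quadric $H$ and in particular the solid $K$ it spans; since $\dim K=4=\dim V$, this gives $V=K$.

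The complementary case is when $V$ does lie in, or interacts trivially with, the hyperplanes $P+J$; here I expect the line $M\cap J$ attached to each $M\in\cG_X$ (the special linear complex of Remark~\ref{rem:1}) to do the work. Using \eqref{eq:X-J}, every $M\in\cG_X$ meets $J$ in a line other than $L$, and these lines sweep out (together with $L$) a special linear complex in the solid $J$. If $V\ne K$, I would argue that the only way for $V$ to meet every such plane in a line is for $V$ to contain $J$; again dimension count $\dim J=4=\dim V$ collapses this to $V=J$. The cleanest way to separate the two cases is probably to test $V$ against the two distinguished non-$\cG_X$ planes: a solid meeting all $X$-planes in lines behaves rigidly with respect to the pencil $\cG_Y=[L,K]_3$ and the regulus $\cG_\alpha$, and one can check $V\cap J$ and $V\cap K$ directly. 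The main obstacle I anticipate is making rigorous the dichotomy ``$V$ contains the $J$-regulus data versus the $K$-regulus data'' without a tedious coordinate case-split; I would try to route everything through the clique structure \eqref{eq:class_X} and Lemma~\ref{lem:line}, so that the regulus $\cG_\alpha$ and its opposite regulus supply the lines that must lie in $V$, and then invoke that a solid containing a full regulus of $H$ is forced to equal $K$, while a solid containing the complex-lines in $J$ is forced to equal $J$.
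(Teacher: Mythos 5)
Your skeleton matches the paper's: for a fixed $P\in\cG_\alpha$ and $W:=P+J$ you set up a dichotomy between ``$V\subset W$'' and ``$P\subset V$'', and you aim at the two conclusions $V=K$ and $V=J$. But the decisive step is missing. The dichotomy gives, \emph{for each individual} $P$, that either $V\subset P+J$ or $P\subset V$; since the hyperplane $W=P+J$ varies with $P$, ``letting $P$ range over the whole regulus'' does \emph{not} show that $V$ contains every line of $\cG_\alpha$ --- a priori $V$ could contain some lines of the regulus while being contained in $P+J$ for the others. Excluding this mixed case is exactly where the paper works hardest: assuming a single line $P_1\in\cG_\alpha$ lies in $V$, it takes three pairwise skew planes $M_1,M_2,M_3\in\cG_X$ with $M_i\cap K=P_i$ and compares the transversals of $M_1,M_2,M_3$ with those of the lines $M_i\cap V$ and with the regulus opposite to $\cG_\alpha$, concluding that the whole opposite regulus, hence every $P\in\cG_\alpha$, lies in $V$, so $V=K$. (A shortcut compatible with your setup: distinct lines $P_1\neq P_2$ of $\cG_\alpha$ satisfy $P_1\not\subset P_2+J$, so $P_1\subset V\subset P_2+J$ is impossible; but \emph{some} such argument must be supplied.) Your $V=J$ case is likewise only asserted: the clean route is that ``$P\not\subset V$ for all $P\in\cG_\alpha$'' yields, via the dichotomy, $V\subset P+J$ for all $P$, and then $\bigcap_{P}(P+J)=J$ forces $V=J$ by dimension. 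Your appeal to the special linear complex in $J$ and the claim that $V$ must ``contain'' $J$ is not an argument.

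There are also local inaccuracies in the part you do argue. If $V\not\subset W$ then $V\cap W$ is a plane (a $3$-dimensional subspace), not a solid of $W$; and $[P,W]_3$ is not a pencil in the paper's sense, since $\dim W-\dim P=3$. Moreover your intended contradiction ``all the planes $M$ would lie in a fixed solid'' only materialises in the subcase $\dim(P\cap V)=1$, where $P+(V\cap W)$ is indeed a solid; if $P\cap V=0$ one must argue differently, namely that then $M\cap V=M\cap(V\cap W)$ is a single point for \emph{every} $M\in[P,W]_3$, contradicting the hypothesis outright. These are fixable, but as written the proposal omits the one genuinely nontrivial idea of the lemma.
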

\begin{proof}
(a) Let $V$ be a solid such that $V\cap M$ is a line for all $M\in\cG_X$. We
first choose a fixed $P\in\cG_\alpha$ and define the hyperplane $W:=P+J$. Then
all but one planes of $[P,W]_3$ belong to $\cG_X$; the only exception is
$P+L\in\cG_Y$. We claim that
\begin{equation}\label{eq:iff}
    V\subset W \;\;\Leftrightarrow\;\; P\not\subset V.
\end{equation}
On the one hand, for $V\subset W$ we obtain $P\not\subset V$, since otherwise
there would exist an $M\in [P,V]_3\cap\cG_X$, and $M\cap V=M$ would not be a
line. On the other hand, for $V\not\subset W$ the intersection $V\cap W$ is a
plane. We have $P\subset V$, since otherwise there would exist an $M\in
[P,W]_3\cap\cG_X$ with $M\cap(V\cap W)=M\cap V$ being a point. For the rest of
the proof we distinguish two cases:
\par
Case 1: There exists a line of $\cG_\alpha$, say $P_1$, with $P_1\subset V$.
There exist two further lines $P_2,P_3\in \cG_\alpha$ and three mutually skew
planes $M_1,M_2,M_3\in\cG_X$ with $M_i\cap K=P_i$ for $i\in\{1,2,3\}$. Through
each point of the plane $M_1$ there is a unique line which meets each of the
planes $M_2$ and $M_3$ at a point. Within the solid $V$ we have a similar
result about the mutually skew lines $M_1\cap V, M_2\cap V, M_3\cap V$: Through
each point of the line $M_1\cap V=P_1$ there is a unique line which meets each
of the lines $M_2\cap V$ and $M_3\cap V$ at a point. Furthermore, through each
point of $P_1$ there is a unique line of the regulus which is opposite to
$\cG_\alpha$, and this line meets each of the lines $P_2\subset M_2$ and
$P_3\subset M_3$ at a point. We combine these three observations and infer that
all lines of the opposite regulus of $\cG_\alpha$ are contained in $V$, that
$P\subset V$ for all $P\in\cG_\alpha$, and finally that $V=K$.
\par
Case 2: For all $P\in\cG_\alpha$ holds $P\not\subset V$. From \eqref{eq:iff}
(with $W$ to be replaced by $P+J$) we infer $V\subset P+J$ for all $P\in\cG_X$.
Due to $\bigcap_{P\in\cG_X} (P+J) = J$, now there holds $V=J$.
\end{proof}

\begin{proof}[Proof of Theorem~\emph{\ref{thm:1}}]

(i) $\Rightarrow$ (ii): The accompanying automorphism of $g$ is, like any
automorphism of $T$, the product of an automorphism $\sigma$ of $F$ (acting
entrywise on $T$) followed by an inner automorphism of $T$
\cite[Theorem~6.6]{chooi+lim-02a}. By virtue of this result, a straightforward
calculation shows that $f=\Phi\circ g\circ \Phi^{-1}$ is an $F$-semilinear
bijection with respect to $\sigma$. The assertion $f(\cG_X)=\cG_X$ is obviously
true.
\par
(ii) $\Rightarrow$ (iii): The $F$-semilinear bijection $f$ is an adjacency
preserver on $\cG_X\cup \cG_Y$. By Corollary~\ref{cor:1} we have
$f(\cG_X)=\cG_X$.
\par
(iii) $\Rightarrow$ (iv): We infer from Lemma~\ref{lem:line} that the regulus
opposite to $\cG_\alpha$ is fixed, as a set of lines, under $f$. Therefore also
the hyperbolic quadric $H$ and the solid $K$ are invariant under $f$. From
$f(K)=K$ and Lemma~\ref{lem:solid} follows $f(J)=J$.
\par
(iv) $\Rightarrow$ (i): The $F$-semilinear bijection $f$ can be written as a
product of three $F$-semilinear bijections $f_1, f_2, f_3$ as follows:
\par
The first bijection is $f_1:(x_1,x_2,\ldots x_6)\mapsto
(\sigma(x_1),\sigma(x_2),\ldots \sigma(x_6))$, where $\sigma$ is the
automorphism of $F$ accompanying $f$. Then $g_1:=\Phi^{-1}\circ f_1\circ\Phi$
is clearly a $T$-semilinear bijection of $T^2$.
\par
The second mapping is the unique linear bijection $f_2:F^6\to F^6$ which fixes
all vectors of $J$ and permutes the lines of the regulus opposite to
$\cG_\alpha$ in the same way as $f\circ f_1^{-1}$. All lines of $\cG_\alpha$
are two-dimensional $f_2$-invariant subspaces. The matrix of $f_2$ can
therefore be written in block diagonal form as $\diag(1,G,1,G)$ with
\begin{equation*}
     G:=\begin{pmatrix}
        1&0\\
        a&b\\
     \end{pmatrix}\in \GL_2(F).
\end{equation*}
The transpose $G^t$ is a ternion. The homothety $g_2:T^2\to T^2: (A,B)\mapsto
G^t (A,B)$ is $T$-semilinear (with respect to an inner automorphism) and an
easy calculation shows $\Phi\circ g_2=f_2\circ\Phi$.
\par
The third mapping is the linear bijection $f_3:=f\circ f_1^{-1}\circ f_2^{-1}$.
It has $J$, $K$ and all lines of the regulus opposite to $\cG_\alpha$ as
invariant subspaces. This implies that the matrix of $f_3$ can be written as in
\eqref{eq:block6}, whence the invertible matrix \eqref{eq:block} defines
$g_3:T^2\to T^2$ with the property $\Phi\circ g_3=f_3\circ\Phi$.
\par
Finally, from $f=f_3\circ f_2\circ f_1$ follows that $g=g_3\circ g_2\circ g_1$
is a $T$-semilinear bijection.
\end{proof}

The previous theorem describes all automorphic collineations of $\cG_X\cup
\cG_Y$ and also of $\cG_X$. It is in the spirit of results from
\cite{westwick-64a}, \cite{westwick-69a}, and \cite{westwick-74a}.
\par
A duality of the projective space on $F^6$ maps any Grassmannian $\cG_k$,
$k\in\{1,2,3,4,5\}$, bijectively onto the Grassmannian $\cG_{6-k}$, and it
preserves adjacency in both directions. In particular, $\cG_3$ is mapped onto
itself. At the first sight somewhat surprisingly, the following holds:
\begin{theorem}
Neither $\cG_X\cup \cG_Y$ nor $\cG_X$ is fixed, as a set of planes, under a
duality of the projective space on $F^6$.
\end{theorem}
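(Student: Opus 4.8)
The plan is to leverage the clique analysis of Section~5 together with the single observation that \emph{a duality turns a hyperplane into a point}. The starting point is that a duality $\delta$ of the projective space on $F^6$ reverses inclusion, maps $\cG_3$ onto $\cG_3$, and preserves adjacency of planes in both directions, since $\dim(M_1\cap M_2)=2$ is equivalent to $\dim(\delta(M_1)+\delta(M_2))=4$. Moreover $\delta$ satisfies the lattice identity $\bigcap_i\delta(M_i)=\delta\big(\sum_i M_i\big)$ for any family of planes (the span being attained already by finitely many members). Consequently, if some duality $\delta$ fixed one of the sets $\cS\in\{\cG_X,\ \cG_X\cup\cG_Y\}$, then $\delta$ would restrict to an automorphism of the adjacency graph on $\cS$ and hence carry each maximal clique of $\cS$ to a maximal clique of $\cS$.

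Next I would pin down the cliques and the relevant invariant. By Section~5, the maximal cliques of $\cG_X\cup\cG_Y$ are the pencil $\cG_Y=[L,K]_3$ together with the stars $[P,P+J]_3$ for $P\in\cG_\alpha$, while those of $\cG_X$ are the punctured stars $[P,P+J]_3\setminus\{P+L\}$. The key invariant is the dimension of the common intersection of a clique: for $\cG_Y$ this intersection is $L$, and for every (punctured) star it is $P$, so in all cases it is a line, i.e.\ $2$-dimensional. By contrast the \emph{span} of a star or punctured star is large: since $P\in\cG_\alpha\subset K$ meets $J$ only in the single point $P\cap L$ (opposite generators of $H$), one has $\dim(P+J)=2+4-1=5$, and the planes of the (punctured) star already span this whole hyperplane $P+J$.

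The contradiction then follows at once, and uniformly for both sets. Fix any $P\in\cG_\alpha$ (the regulus is nonempty) and let $C$ be the corresponding star clique of $\cG_X\cup\cG_Y$, respectively the punctured-star clique of $\cG_X$. If $\delta$ fixes the set, $\delta(C)$ is again one of its maximal cliques, so $\dim\big(\bigcap\delta(C)\big)=2$. But $\bigcap\delta(C)=\delta\big(\sum C\big)=\delta(P+J)$ has dimension $6-5=1$. This is impossible, so no duality fixes $\cG_X\cup\cG_Y$, and the same single clique argument rules out $\cG_X$.

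The main obstacle is conceptual rather than computational: one must notice that a duality interchanges the two inclusion types of ``full plane of planes'' (the star $[\text{line},\text{hyperplane}]_3$ and the co-star $[\text{point},\text{solid}]_3$), whereas the cliques actually occurring in $\cG_X$ and $\cG_X\cup\cG_Y$ are \emph{all} of star type and therefore dualise to cliques whose common intersection is a point, never a line. This asymmetry is exactly what the existence of the $F$-linear antiautomorphism of $T$ fails to detect. A secondary point to handle carefully, especially over infinite $F$, is that the clique cardinalities $|F|+1$ and $|F|^2+|F|+1$ no longer distinguish the two clique types, so the argument must be run through the intersection/span dimensions rather than through counting.
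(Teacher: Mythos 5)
Your proof is correct, but it takes a genuinely different route from the paper's. The paper first uses Corollary~\ref{cor:1} to reduce the $\cG_X\cup\cG_Y$ case to the $\cG_X$ case, and then invokes Lemma~\ref{lem:line} and Lemma~\ref{lem:solid}: a duality fixing $\cG_X$ would have to biject the lines $Q$ meeting every $M\in\cG_X$ in a point (these form the regulus opposite to $\cG_\alpha$, so there are $|F|+1\geq 3$ of them) with the solids $V$ meeting every $M\in\cG_X$ in a line (there are exactly two, namely $J$ and $K$), which is impossible. You instead exploit the clique structure from Section~5: every maximal clique of $\cG_X$ or of $\cG_X\cup\cG_Y$ has a line as its common intersection, while a star clique $[P,P+J]_3$ (or its punctured version) spans the hyperplane $P+J$, so its $\delta$-image has common intersection $\delta(P+J)$, a point --- contradicting that this image must again be a maximal clique. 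Your supporting facts all check out: $\dim(P+J)=2+4-1=5$ because $P\cap J=P\cap L$ is a single point; the punctured star still spans $P+J$ and still has common intersection exactly $P$; and the lattice identity $\bigcap_i\delta(M_i)=\delta\big(\sum_i M_i\big)$ holds because the span is already attained by a finite subfamily. Your argument treats both sets uniformly and needs neither Corollary~\ref{cor:1} nor the two lemmas, whereas the paper recycles Lemmas~\ref{lem:line} and \ref{lem:solid}, which it has in any case established for Theorem~\ref{thm:1}; in context the two proofs are about equally economical. Your closing observation --- that all cliques occurring here are of star type $[\mbox{line},\mbox{hyperplane}]_3$, which a duality would convert into co-star type $[\mbox{point},\mbox{solid}]_3$ --- is a clean conceptual summary of why the statement holds, and your caveat about not relying on clique cardinalities over infinite $F$ is well taken.
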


\begin{proof}
Let $\delta$ be a duality fixing $\cG_X\cup \cG_Y$. Hence the restriction of
$\delta$ to $\cG_X\cup \cG_Y$ is an adjacency preserver. By
Corollary~\ref{cor:1}, we obtain $\delta(\cG_X)=\cG_X$.
\par
Assume to the contrary that there exists a duality fixing $\cG_X$. This duality
maps the lines $Q$, described in Lemma~\ref{lem:line}, to the solids $V$,
described in Lemma~\ref{lem:solid}, in a bijective way. We obtain a
contradiction, since there are $|F|+1>2$ such lines, but only two such solids.
\end{proof}
\begin{rem}
Every \emph{Jordan automorphism\/} of an arbitrary ring $R$ of stable rank $2$
defines a bijective mapping of the set of unimodular points of $R^2$ onto
itself. See \cite[4.2]{blunck+he-05} or \cite[p.~832]{herz-95} for further
details and, in particular, the rather complicated definition of such a
mapping. For our $F$-algebra of ternions the situation is less intricate, since
every Jordan automorphism of $T$ is either an automorphism or an
antiautomorphism; moreover, it is $F$-semilinear
\cite[Theorem~6.6]{chooi+lim-02a}. We already referred to this result in the
proof of Theorem~\ref{thm:1}, where we described all automorphisms of $T$. We
add, for the sake of completeness, that an antiautomorphism of $T$ is given by
\begin{equation*}
    \begin{pmatrix} x & y\\0 & z\end{pmatrix}
    \mapsto
    \begin{pmatrix} z & y\\0 & x\end{pmatrix}.
\end{equation*}
The mappings of unimodular points arising from antiautomorphisms appear also in
\cite{benz-77} for the ternions over the real numbers and in
\cite{blunck+h-01b} in a more general setting.
\par
In terms of the line model from the beginning of Section~\ref{se:represent} any
mapping $\omega$ on unimodular points arising from an antiautomorphism of $T$
is induced by a duality which preserves the axis of the special linear complex.
All lines through one of the points of this axis go over to all lines in one of
the planes through the axis. From Remark~\ref{rem:1} and
Proposition~\ref{prop:X}, in our model we obtain from $\omega$ a bijection
$\xi$ of $\cG_X$ onto itself which \emph{does not preserve adjacency}. Indeed,
there are $M_1,M_2\in\cG_X$ with $M_1\cap M_2\in\cG_\alpha$ for which the lines
$M_1\cap K$, $M_2\cap K$, and $L$ are not coplanar. Therefore $\xi(M_1)\cap
\xi(M_2)\in\cG_\beta$ is only a single point. However, $\xi$ preserves
(unordered) pairs of skew planes, since these correspond via $\Phi$ to
\emph{distant\/} unimodular points (in the terminology of \cite{blunck+he-05}
and \cite{herz-95}) and $\omega$ preserves (unordered) pairs of distant points.
\par
Also, there does not seem to be a natural extension of $\omega$ to
non-unimodular points. This is in sharp contrast to the observation in the
introduction of \cite{faure-04a} that non-unimodular points are
``indispensable'' for an arbitrary semilinear mapping to define a morphism of
projective spaces over rings.
\end{rem}

\paragraph{\textbf{Acknowledgements.}} This work was carried
out within the framework of the Scientific and Technological Cooperation
Poland-Austria 2010--2011.




\par$\;$\par 

\noindent
Hans Havlicek\\
Institut f\"{u}r Diskrete Mathematik und Geometrie\\
Technische Universit\"{a}t\\
Wiedner Hauptstra{\ss}e 8--10/104\\
A-1040 Wien\\
Austria\\
\texttt{havlicek@geometrie.tuwien.ac.at}
\par~\par
\noindent%
Andrzej Matra\'{s} and Mark Pankov\\
Faculty of Mathematics and Computer Science\\
University of Warmia and Mazury\\
\.{Z}o{\l}nierska 14A\\
PL-10-561 Olsztyn\\
Poland\\
\texttt{matras@uwm.edu.pl}\\
\texttt{markpankov@gmail.com}

\end{document}